\theoremstyle{definition} 
\newtheorem{theorem}{Theorem}[section]
\newtheorem{corollary}[theorem]{Corollary}
\newtheorem{lemma}[theorem]{Lemma}
\newtheorem{proposition}[theorem]{Proposition}
\newtheorem{example}[theorem]{Example}
\newtheorem{question}[theorem]{Question}
\DeclarePairedDelimiter\floor{\lfloor}{\rfloor}
\DeclarePairedDelimiter\ceil{\lceil}{\rceil}
\newcommand{\cart}{\boxempty}
\DeclareMathOperator{\val}{val}
\DeclareMathOperator{\gon}{gon}
\DeclareMathOperator{\mfgon}{mfgon}
\title{Multiplicity-Free Gonality on Graphs}
\author{Frances Dean, Max Everett, and Ralph Morrison}
\date{}
\begin{document}

\maketitle

\begin{abstract}
The divisorial gonality of a graph is the minimum degree of a positive rank divisor on that graph.  We introduce the multiplicity-free gonality of a graph, which restricts our consideration to divisors that place at most \(1\) chip on each vertex.  We give a sufficient condition in terms of vertex-connectivity for these two versions of gonality to be equal; and we show that no function of gonality can bound multiplicity-free gonality, even for simple graphs.  We also prove that multiplicity-free gonality is NP-hard to compute, while still determining it for graph families for which gonality is currently unknown.  We also present new gonalities, such as for the wheel graphs.
\end{abstract}

\section{Introduction}
Chip-firing games on graphs provide a discrete, combinatorial analog to divisor theory on algebraic curves.  The theory of divisors on graphs mirrors that on algebraic curves through analogs of such results as the Riemann-Roch theorem \cite{bn} and results on graphs imply results on curves through Baker's specialization lemma \cite{baker}.  This allows for purely combinatorial methods to prove theorems in algebraic geometry.  This theory has been developed for both metric graphs and finite (non-metric) graphs.  Throughout this paper we work with finite multigraphs, with multiple edges allowed between vertices but no loops from a vertex to itself (if there are no multiple edges between any pair of vertices, the graph is called \emph{simple}).

One much-studied invariant of curves, and more recently of graphs, is \emph{gonality} (specified as divisorial gonality for graphs). In either setting, it can be defined as the minimum degree of a positive rank divisor; in the algebro-geometric world it is also the minimum degree of a surjective morphism from the curve to a projective line.   In the graph theoretic world, gonality admits a description in terms of a game:  Player A places \(k\) chips on the vertices of a graph, and Player B adds \(-1\) chips.  If Player A can perform certain ``chip-firing'' moves on the graph to eliminate debt, then Player A wins; if not, then Player B wins.  The gonality of the graph can be defined as the minimum \(k\) such that Player A has a placement of \(k\) chips that wins against Player B, no matter how Player B plays.     We note that there are a number of other definitions of graph gonality inequivalent to divisorial gonality, including stable gonality  and stable divisorial gonality \cite{stable_gonality}.  Throughout this paper ``gonality'' without a qualifier refers to divisorial gonality.

We introduce a variation of gonality which we call \emph{multiplicity-free gonality}.  Loosely speaking, a divisor is mutiplicity-free if it places either \(0\) or \(1\) chips on each vertex.  The multiplicity-free gonality of the graph is then the minimum degree of a multiplicity-free divisor that wins the gonality game.  It immediately follows that gonality is at most multiplicity-free gonality; we will see in Section \ref{section:background} that there are graphs with multiplicity-free gonality strictly larger than gonality, such as the \emph{slashed ladder graph} in Figure~\ref{figure:no_multiplicity_free}.

\begin{figure}[hbt]
   		 \centering
\includegraphics[scale=1]{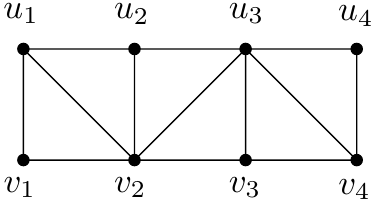}
	\caption{The \(2\times 4\) slashed ladder graph, with gonality strictly smaller than multiplicity-free gonality}
	\label{figure:no_multiplicity_free}
\end{figure}

One reason for introducing multiplicity-free gonality is that it is in many ways more feasible to study than traditional gonality.  Although both are NP-hard to compute (see \cite{gij} for gonality, and our Theorem \ref{theorem:mfgon_nphard} for multiplicity-free gonality), brute-force methods have to consider significantly fewer divisors for multiplicity-free gonality.  Moreover, multiplicity-free gonality is much more amenable to proofs utilizing such techniques as Dhar's burning algorithm \cite{dhar}; in Section \ref{section:families} we will leverage this to compute the multiplicity-free gonality of an arbitrary \(\ell\)-dimensional rook's graph, along with other graph families.

The question then becomes when are gonality and multiplicity-free gonality equal.  We prove in Section \ref{section:equality} that if a simple graph has gonality equal to its vertex connectivity, then it also has gonality equal to multiplicity-free gonality.  We also provide several negative results that show the limitations of relating gonality with multiplicity-free gonality.  In Section \ref{section:inequality} we prove that for graphs of any fixed gonality \(2\) or more, multiplicity-free gonality can take on \emph{any} larger value, meaning that we cannot bound multiplicity-free gonality with a function of gonality; except when the fixed gonality is \(2\), we can achieve the same result even for simple graphs.  We also show in Section \ref{section:families} that for any \(r\geq 4\), there exists an \(r\)-regular graph with gonality strictly smaller than multiplicity-free gonality; that section also includes our results on particular graph families, and our proof that multiplicity-free gonality is NP-hard to compute.

\medskip

\noindent \textbf{Acknowledgements.}  The authors were supported by Williams College, by the 2018 and 2020 iterations of the SMALL REU, and by NSF Grants DMS-1659037 and DMS-2011743.  They are grateful to Ivan Aidun, Teresa Yu, and Julie Yuan for many conversations on wheels, antiprisms, and other graphs; to Josh Carlson for reading through a preliminary version of this paper; and to Lisa Cenek, Lizzie Ferguson, Eyobel Gebre, Cassandra Marcussen, Jason Meintjes, Liz Ostermeyer, and Shefali Ramakrishna for help in gonality computation.

\section{Background and initial results}
\label{section:background}

A graph \(G=(V,E)\) is a finite collection of vertices \(V(G)\) with a finite multiset of edges \(E(G)\) connecting distinct vertices of \(V(G)\).  A graph is \emph{connected} if it is possible to travel from every vertex to every other vertex along edges. A set \(S\subset V(G)\) of vertices is called a \emph{vertex-cut} if deleting the vertices in \(S\) from \(G\) yields either a disconnected graph, or a graph on one vertex; the minimum cardinality of a vertex-cut of \(G\) is called the \emph{vertex-connectivity} \(\kappa(G)\) of \(G\).  The number of edges incident to a vertex \(v\) is called the \emph{valence}\footnote{Often this is called the \emph{degree} of \(v\); in this paper we reserve the word ``degree'' for another meaning.} of \(v\), denoted \(\val(v)\).  Given a subset \(U\subset V(G)\), the \emph{outdegree} of \(U\) is the number of edges with one endpoint in \(U\) and the other endpoint in \(U^C\).

Letting \(G\) be a connected graph, we let \(\textrm{Div}(G)\) denote the finite abelian group on the vertices of \(G\);  as a group,  \(\textrm{Div}(G)\)  is isomorphic to \(\mathbb{Z}^{|V(G)|}\).  Any element of \(\textrm{Div}(G)\)  is called a \emph{divisor}.  We write \(D\in \textrm{Div}(G)\) as
\[D=\sum_{v\in V(G)}a_v\cdot (v),\]
where \(a_v\in\mathbb{Z}\).  The coefficient of \((v)\) in \(D\) is sometimes denoted \(D(v)\); that is, \(D(v)=a_v\).  The \emph{degree} of a divisor is the sum of the coefficients:
\[\deg(D)=\sum_{v\in V(G)}D(v)=\sum_{v\in V(G)}a_v.\]
We say \(D\) is \emph{effective}, written \(D\geq 0\), if \(D(v)\geq 0\) for all \(v\).

The \emph{Laplacian matrix} \(\mathcal{L}\) of \(G\) is the \(|V(G)|\times |V(G)|\) matrix whose diagonal entries record the valences of the vertices of \(G\), and whose off-diagonal entry \(\mathcal{L}_{ij}\) is equal to minus the number of edges connecting vertex \(i\) to vertex \(j\).  We say that two divisors \(D,D'\in \textrm{Div}(G)\) are \emph{equivalent}, written \(D\sim D'\) if \(D-D'\) (thought of as a vector) is in the column space of \(\mathcal{L}\).  This forms an equivalence relation on \(\textrm{Div}(G)\).

This equivalence relation admits a more intuitive description in the language of chip-firing games.  We think of a divisor \(D\) as a placement of poker chips on a graph, where vertex \(v\) has \(D(v)\) chips; note that a vertex may have a negative number of chips, in which case we describe that vertex as being ``in debt.''  We can then perform chip-firing moves.  The \emph{chip-firing move at \(v\)} transforms \(D\) to \(D'\) by removing \(\textrm{val}(v)\) chips from \(v\) and moving them along each edge incident to \(v\) to its neighbors.
Then two divisors \(D\) and \(D'\) are equivalent under our Laplacian definition if and only if they differ by a sequence of chip-firing moves.  Three equivalent divisors are illustrated in Figure \ref{figure:no_multiplicity_free_chips1}; the second is obtained from the first by chip-firing \(v_1\), and the third is obtained from the second by chip-firing \(u_1\).

\begin{figure}[hbt]
    \centering
    \includegraphics{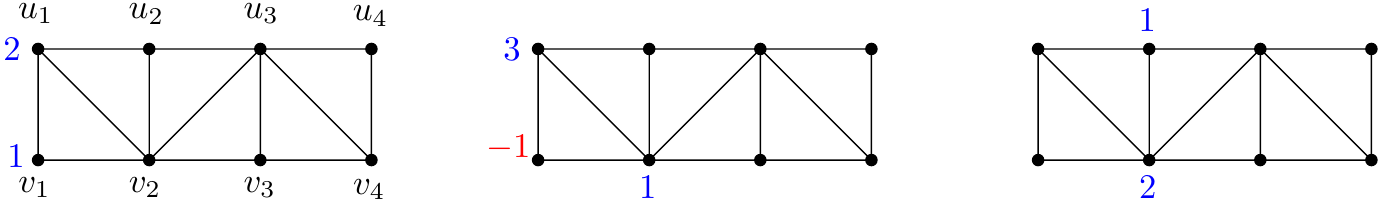}
    \caption{Three equivalent divisors on the \(2\times 4\) slashed ladder graph: \(2(u_1)+(v_1)\), \(3(u_1)-(v_1)+(v_2)\), and \((u_2)+2(v_2)\)}
    \label{figure:no_multiplicity_free_chips1}
\end{figure}

Performing the same collection of chip-firing moves in a different order does not change the resulting divisor.  Thus we can think of simultaneously chip-firing a subset \(U\subset V(G)\); the net effect is that one chip moves along every edge connecting \(U\) to \(U^C\), since any two vertices both in \(U\) being fired cancel out with respect to each other.  If chip-firing \(U\) does not introduce any new debt on \(G\), then we refer to \(U\) as a \emph{legal firing move}.

Given a divisor \(D\), a natural question is:  does there exist a divisor \(D'\) with \(D\sim D'\) and \(D\geq 0\)?  Or in the language of chip placements, can we perform chip-firing moves to eliminate all debt in \(D\)?  This question, sometimes called the Dollar Game, can be answered using \emph{\(q\)-reduced divisors} and \emph{Dhar's burning algorithm}.

Let \(q\in V(G)\).  We say that a divisor \(D\) is \emph{\(q\)-reduced} if the following two conditions are satisfied:
\begin{itemize}
    \item[(i)] \(D(v)\geq 0\) for all \(v\in V(G)-q\); and
    \item[(ii)] there does not exist a subset \(U\subset V(G)-q\) that is a legal firing move.
\end{itemize}
For each \(q\), every divisor \(D\) is equivalent to a unique \(q\)-reduced divisor, denoted \(D_q\) \cite[Proposition 3.1]{bn}; and \(D\) is equivalent to an effective divisor if and only if \(D_q\) is effective (note that it suffices to check that this holds for a single \(q\)).  

Thus being able to find \(q\)-reduced divisors is incredibly important.  Achieving condition (i) is always feasible; for instance, chip-firing \(q\) a large number of times will introduce enough chips into the rest of the graph to allow for the elimination of all debt away from \(q\).  From there, we use Dhar's burning algorithm \cite{dhar} to find subsets of \(V(G)-q\) that can perform chip-firing moves.  This algorithm works by starting a ``fire'' at \(q\), and letting the fire propagate through the graph according to the following rules:
\begin{itemize}
    \item If an edge is incident to a burning vertex, then that edge burns.
    \item  If a vertex is incident to more than \(D(v)\) burning edges, then that vertex burns.
\end{itemize}
If the entire graph burns, then \(D\) is \(q\)-reduced.  If there is an unburned set \(U\subset V(G)-q\) of vertices, then \(U\) is a legal firing move.  Chip-fire \(U\), and run the burning process on the new divisor.  In each iteration, either the whole graph burns or there is a new subset of vertices to fire.  Eventually the process terminates with the whole graph burning, at which point we have found our unique \(q\)-reduced divisor equivalent to \(D\).

To generalize the question of whether or not \(D\) is equivalent to an effective divisor, we introduce the notion of \emph{rank}.  Roughly speaking, the rank of a divisor indicates how much added debt the divisor can eliminate, regardless of where that debt is placed.  More formally, we define \(r(D)=-1\) if \(D\) is not equivalent to an effective divisor (meaning that \(D\) cannot even eliminate its own debt); and otherwise \(r(D)=r\) where \(r\) is the maximum nonnegative integer such that for any divisor \(E\geq 0\) of degree \(r\), we have that \(D-E\) is equivalent to an effective divisor. The following result, called the Riemann-Roch Theorem for graphs, is one of the most famous results regarding the ranks of divisors.  It is phrased in terms of the \emph{canonical divisor} \(K=\sum_{v\in V(G)}(\textrm{val}(v)-2)(v)\), and in terms of the graph's first Betti number \(g=|E(G)|-|V(G)|+1\).

\begin{theorem}[\cite{bn}] \label{theorem:riemann_roch} If \(D\) is a divisor on a graph \(G\), then
\[r(D)-r(K-D)=\deg(D)+1-g.\]
\end{theorem}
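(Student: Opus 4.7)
The plan is to follow the Baker--Norine strategy, which centers on a symmetry among divisor classes of the critical degree $g-1$. Let
\[\mathcal{N} = \bigl\{[D] : \deg(D) = g-1 \text{ and } r(D) = -1\bigr\}.\]
The heart of the proof is to show that the involution $D\mapsto K-D$ preserves $\mathcal{N}$; combined with Riemann's inequality $r(D)\geq\deg(D)-g$ (which itself follows from the existence and uniqueness of $q$-reduced representatives by a short counting argument), this is enough to deduce the Riemann--Roch equality by induction on $\deg(D)$.

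To produce many elements of $\mathcal{N}$ in a way visibly symmetric under $D\mapsto K-D$, I would associate a divisor to each acyclic orientation $\mathcal{O}$ of $G$:
\[D_\mathcal{O} = \sum_{v\in V(G)}\bigl(\operatorname{indeg}_\mathcal{O}(v)-1\bigr)(v),\]
of degree $|E(G)|-|V(G)|=g-1$. The reversed orientation $\mathcal{O}^{\mathrm{rev}}$ is again acyclic, and $\operatorname{indeg}_\mathcal{O}(v)+\operatorname{indeg}_{\mathcal{O}^{\mathrm{rev}}}(v)=\val(v)$ gives
\[D_\mathcal{O} + D_{\mathcal{O}^{\mathrm{rev}}} = \sum_v\bigl(\val(v)-2\bigr)(v) = K,\]
so $K-D_\mathcal{O}=D_{\mathcal{O}^{\mathrm{rev}}}$. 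Fixing a source $q$ of $\mathcal{O}$, Dhar's algorithm applied to $D_\mathcal{O}$ burns the whole graph in the topological order of $\mathcal{O}$: each $v\neq q$ has all $\operatorname{indeg}_\mathcal{O}(v)$ of its incoming edges burning once its predecessors have burned, which exceeds $D_\mathcal{O}(v)=\operatorname{indeg}_\mathcal{O}(v)-1$ and forces $v$ to burn. Thus $D_\mathcal{O}$ is $q$-reduced with $D_\mathcal{O}(q)=-1$, so $[D_\mathcal{O}]\in\mathcal{N}$ and simultaneously $[K-D_\mathcal{O}]=[D_{\mathcal{O}^{\mathrm{rev}}}]\in\mathcal{N}$.

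The main obstacle is the converse half of the symmetry: showing that \emph{every} class in $\mathcal{N}$ arises as $[D_\mathcal{O}]$ for some acyclic orientation. Given such a class, I would fix $q$, pass to the $q$-reduced representative $D$, and argue that the combined constraints of $\deg(D)=g-1$, $D(q)<0$, and $D$ being $q$-reduced force $D(q)=-1$ with $0\leq D(v)<\val(v)$ for all $v\neq q$. Running Dhar from $q$ then burns the entire graph in some order, and orienting each edge in the direction the fire first traverses it produces an acyclic orientation $\mathcal{O}$; a vertex-by-vertex accounting then recovers $D=D_\mathcal{O}$ exactly. This saturation-and-reconstruction step is the delicate technical heart of the proof.

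With the symmetry in place, the Riemann--Roch formula follows by case analysis on $\deg(D)$. Divisors of degree outside $[0,2g-2]$ are handled by Riemann's inequality on one side and the vanishing $r(D)=-1$ for $\deg(D)<0$ on the other. For $0\leq\deg(D)\leq 2g-2$, one inducts on $\deg(D)$, decrementing $D$ one chip at a time (noting that $r$ drops by at most one under such a decrement) and using the symmetry at the critical degree $g-1$ as the boundary condition that rigidifies $r(D)-r(K-D)$ to equal $\deg(D)+1-g$.
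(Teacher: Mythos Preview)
The paper does not prove this theorem: it is stated with the citation \cite{bn} and no proof is given. There is therefore nothing in the paper to compare your proposal against; the authors simply import the Riemann--Roch theorem for graphs from Baker and Norine as a black box.

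That said, your outline is a faithful high-level sketch of the Baker--Norine argument from the cited reference: the set $\mathcal{N}$ of degree-$(g-1)$ non-effective classes, the orientation divisors $D_{\mathcal{O}}$ and their symmetry under reversal, and the reduction to Riemann's inequality together with the $\mathcal{N}$-symmetry are exactly the ingredients of the original proof. One point to be careful with in the ``reconstruction'' step: the sentence ``orienting each edge in the direction the fire first traverses it produces an acyclic orientation $\mathcal{O}$; a vertex-by-vertex accounting then recovers $D=D_\mathcal{O}$ exactly'' hides real work. Dhar's burning process may burn several edges into a vertex in the same round, and you need the orientation to have \emph{exactly} $D(v)+1$ edges pointing into each $v\neq q$, not merely at least that many. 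Baker and Norine handle this by a more careful construction (essentially building the orientation level-by-level with a tie-breaking rule, or equivalently via the bijection between $q$-reduced divisors and $G$-parking functions/superstable configurations); your sketch should acknowledge that this step requires more than the naive ``direction of first burn'' description.
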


The \emph{divisorial gonality} (or simply gonality) of a graph \(G\), denoted \(\gon(G)\), is the minimum degree of a divisor of positive rank.  Informally, it is the smallest number of chips we can place on a graph such that no matter where \(-1\) debt is placed, one can eliminate all debt with chip-firing moves.  If a divisor \(D\) has positive rank and \(deg(D)=\gon(G)\), we say that \(D\) \emph{achieves} gonality.

Proving that the gonality of a graph is equal to \(k\) is quite involved.  First, one needs to prove that there exists a divisor of degree \(k\) and positive rank; and more challengingly, one needs to show that every effective divisor of degree \(k-1\) has rank \(0\).  
%
The following lemma will be useful for us in the latter part of such arguments.

\begin{lemma}\label{lemma:dhars_q_v}
Let \(q,v\in V(G)\) be distinct vertices, and let \(D\) be a \(q\)-reduced effective divisor such that \(D(v)=0\).  Run one iteration of Dhar's algorithm on \(D\) from the vertex \(v\).  If the vertex \(q\) burns, then \(r(D)=0\).
\end{lemma}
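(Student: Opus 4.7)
The plan is to prove $r(D) = 0$ by exhibiting a single vertex whose removal leaves a divisor not equivalent to any effective one. Since $D$ is already effective we have $r(D) \geq 0$, so it suffices to find an effective divisor $E$ of degree $1$ with $D - E \not\sim $ any effective divisor. Guided by the hypothesis $D(v) = 0$, I will take $E = (v)$.

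To show $D - (v)$ is not equivalent to any effective divisor, I will show that $D - (v)$ is itself $v$-reduced. Given this, since $(D - (v))(v) = -1 < 0$, the divisor $D - (v)$ is a $v$-reduced non-effective divisor, so by uniqueness of the $v$-reduced representative in its equivalence class, no effective divisor lies in that class. For the $v$-reducedness itself: condition (i) (non-negativity off $v$) is immediate from the effectiveness of $D$, since $D - (v)$ agrees with $D$ on $V(G) \setminus \{v\}$. Condition (ii) reduces to showing that $D$ itself has no legal firing set contained in $V(G) \setminus \{v\}$, since the legality of such a firing set only depends on the divisor's values on that set, where $D$ and $D-(v)$ coincide.

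The crux is this last statement, and it is where the two hypotheses combine. Suppose for contradiction there is a legal firing set $U \subset V(G) \setminus \{v\}$ for $D$. By the standard analysis underlying Dhar's algorithm, the unburned set $U_1$ after one iteration starting from $v$ is a legal firing set, and is in fact the (unique) maximal legal firing set in $V(G) \setminus \{v\}$; thus $U \subseteq U_1$ forces $U_1$ nonempty. The hypothesis that $q$ burns gives $q \notin U_1$, so $U_1 \subseteq V(G) \setminus \{q\}$; but this contradicts the $q$-reducedness of $D$, which forbids any legal firing set in $V(G) \setminus \{q\}$.

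The main obstacle is simply spotting this interplay between Dhar's algorithm run from $v$ and the $q$-reducedness hypothesis on $D$: the conclusion about $r(D)$ comes from running Dhar's with a non-standard source, and the non-standard source is tied to the condition $D(v) = 0$ that makes $D - (v)$ fail effectiveness exactly at $v$. Once the target divisor $D - (v)$ is chosen, the rest of the argument is essentially forced.
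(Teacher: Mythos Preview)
Your proof is correct and follows essentially the same approach as the paper's: both argue that if the unburned set $U_1$ from Dhar's algorithm starting at $v$ were nonempty, then (since $q$ burns) $U_1$ would be a legal firing set contained in $V(G)\setminus\{q\}$, contradicting $q$-reducedness; hence the whole graph burns, $D$ is $v$-reduced, and $D(v)=0$ forces $r(D)=0$. Your version is a bit more explicit about the mechanism (working with $D-(v)$ and invoking uniqueness of the $v$-reduced representative), and your detour through an auxiliary $U\subseteq U_1$ is unnecessary since $U_1$ itself already gives the contradiction, but the substance is identical.
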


\begin{proof}
 Suppose for the sake of contradiction the algorithm does not burn the whole graph, although it does burn \(q\). Then  the unburned set of vertices \(U\subset V(G)-{v}\) gives  a legal chip-firing move. However, since \(q\) is burned we have that \(U\subset V(G)-{q}\) is a legal firing move, a contradiction to \(D\) being \(q\)-reduced.  Thus Dhar's algorithm burns the whole graph.  It follows that \(D\) is \(v\)-reduced, and since \(D(v)=0\), we have that \(r(D)=0\). 
\end{proof}

Some graphs are more susceptible to arguments using Dhar's burning algorithm than others.  Due to the number of edges along which fires can spread, complete graphs allow for detailed analysis using such methods, as illustrated in the following lemma.

\begin{lemma}\label{lemma:complete_graph_burning}
The gonality of the complete graph $K_n$ is equal to $n-1$, and is achieved only by placing $n-1$ chips on a single vertex, or by placing $1$ chip on \(n-1\) distinct vertices.  Moreover, any other effective divisor of degree at most $n-1$ will burn in a single iteration of Dhar's burning algorithm.
\end{lemma}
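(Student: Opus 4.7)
The plan is to use Dhar's burning algorithm to obtain all three conclusions from a single calculation. First I would verify that both listed divisors really do achieve gonality. Setting $D_0 = (n-1)(v)$, firing $v$ sends one chip along each incident edge, giving $D_0 \sim \sum_{u \neq v}(u)$. Hence $D_0 - (w)$ is equivalent to an effective divisor for every $w \in V(K_n)$ (use $D_0$ itself if $w = v$, use the fired divisor otherwise), so $r(D_0) \geq 1$ and $\gon(K_n) \leq n - 1$. The two special divisors are equivalent via this single firing, so both achieve gonality.

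The heart of the proof is the following uniform Dhar's computation. Let $D$ be any effective divisor with $\deg D \leq n - 1$. Since $\deg D < n$, there is a vertex $q$ with $D(q) = 0$; I run Dhar's algorithm from $q$ and let $U$ be the stabilized unburned set, with $t = |U|$. Because $K_n$ is complete, every $u \in U$ has exactly $n - t$ burning edges incident to it, so its failure to burn forces $D(u) \geq n - t$. Summing over $U$,
\[
n - 1 \;\geq\; \deg D \;\geq\; \sum_{u \in U} D(u) \;\geq\; t(n-t),
\]
which rearranges to $(t-1)(n-t-1) \leq 0$. Since $q$ burns we have $t \leq n - 1$, and this inequality collapses to $t \in \{0, 1, n-1\}$.

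The two nontrivial options pin $D$ down exactly. The case $t = 1$ forces $D(v) = n - 1$ on a single vertex $v \neq q$, giving $D = (n-1)(v)$; the case $t = n - 1$ forces $D(u) \geq 1$ for every $u \neq q$, which together with the degree bound forces $D(u) = 1$ for each such $u$, giving $D = \sum_{u \neq q}(u)$. Both are precisely the special divisors identified in the statement. For every other effective $D$ of degree at most $n - 1$, we must be in the case $t = 0$, so one iteration of Dhar's burns the whole graph; then $D$ is $q$-reduced with $D(q) = 0$, so $D - (q)$ is a non-effective $q$-reduced divisor and hence $r(D) = 0$. This simultaneously establishes $\gon(K_n) \geq n - 1$, the uniqueness of the gonality-achieving divisors, and the single-iteration burning claim. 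I expect no conceptual obstacle; the only nontrivial ingredient is the inequality $(t-1)(n-t-1) \leq 0$, whose two boundary cases happen to match the two exceptional divisors in the statement exactly, which is what makes the bookkeeping line up cleanly.
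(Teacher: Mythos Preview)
Your proposal is correct and follows essentially the same argument as the paper: both run Dhar's burning algorithm from a chipless vertex, derive the bound $\deg(D)\geq t(n-t)$ for the size $t$ of the unburned set, and identify the boundary cases $t=1$ and $t=n-1$ with the two special divisors. Your factorization $(t-1)(n-t-1)\leq 0$ is a slightly cleaner way to isolate those cases than the paper's concavity argument, but the substance is identical.
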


\begin{proof}
First, we argue that a divisor of degree $n-1$ in either of the given arrangements will be able to eliminate debt from the graph.  In the first case, where $n-1$ chips are placed on a single vertex, this vertex can be fired to send $1$ chip to every other vertex of the graph, thus eliminating debt from wherever it was placed.  In the second case, we can do the opposite: fire every vertex of the graph except for the one vertex $v$ without a chip, eliminating debt from $v$.  

Let \(D\) be another effective divisor of  degree \(n-1\), let \(q\) be any vertex with \(D(q)=0\), and suppose for the sake of contradiction that the whole graph does not burn on the first iteration of Dhar's burning algorithm on \(D-(q)\). Then there must be \(u\) unburned vertices where $1 \leq u \leq n-1$.  Since every vertex is adjacent to every other vertex, this means that each of the $u$ unburned vertices will be incident to $n-u$ burning edges, so each unburned vertex must have $n-u$ chips (otherwise they would burn).  So, we have $\deg(D)\geq u(n-u)$. As a function of $u$, this expression is concave down, and therefore will achieve its minimum on the interval at one of the boundary points, in this case $u=1$ or $u=n-1$.  If $u = 1$ or \(u=n-1\), we have $u(n-u) = n-1$; however, these values of $u$ correspond exactly to the two arrangements discussed above!  If $1$ vertex doesn't burn, it has $n-1$ chips on it, and thus all of the chips in the chip configuration.  And if $n-1$ vertices don't burn, then each vertex must have $1$ chip on them.  Since \(D\) was not one of these placements, we must have \(2\leq u\leq n-2\).  Since $u(n-u)$ is concave down, any value of \(u\) in this smaller range will result in $n(n-u)>n-1$, contradicting \(\deg(D)=n-1\) since \(\deg(D)\geq n(n-u)\). This completes the proof.
\end{proof}

We say that an effective divisor $D$ is \emph{multiplicity-free} if $D(v)\leq 1$ for every vertex $v$.  In other words, $D$ is multiplicity-free if it places at most $1$ chip on each vertex. The \emph{multiplicity-free gonality} \(\mfgon(G)\) of a graph \(G\) is the minimum degree of a multiplicity-free divisor of positive rank.

We immediately have that \(\gon(G)\leq \mfgon(G)\).  Not every graph has \(\gon(G)=\mfgon(G)\); for instance, a graph $G$ with $V(G)=\{v_1,v_2,v_3\}$ and edge multiset $\{v_1v_2,v_1v_2,v_2v_3,v_2v_3\}$ has gonality $2$, but every rank $1$ divisor of degree \(2\) is of the form $2(v_i)$ for some $i$.  It turns out there are also simple graphs with a gap between the two versions of gonality, as shown in the following examples.

\begin{example}\label{example:first_one}
Consider the graph $G$ on $8$ vertices illustrated in Figure \ref{figure:no_multiplicity_free}. We refer to this as the \(2\times 4\) \emph{slashed ladder graph}. If we construct a divisor $D$ by placing $3$ (or fewer) chips on distinct vertices, there exists some $i$ such that $u_i$ and $v_i$ both lack chips.  We claim that running Dhar's burning algorithm on $D-(u_i)$ then burns the whole graph.  Certainly $v_i$ also burns, and then any neighboring $u_j$, $v_j$ pair will burn as well:  even if each has a chip, one has $2$ incident burning edges and burns, and then the other has $2$ incident burning edges and burns.  This propagates until the whole graph burns, implying that $r(D)=0$.  However, there does exists a divisor of positive rank and degree $3$, namely $D=2(u_1)+(v_1)$; this divisor appears in Figure \ref{figure:no_multiplicity_free_chips1}.  This is equivalent to the divisors $(u_2)+2(v_2)$, $2(u_3)+(v_3)$, and $(u_4)+2(v_4)$; since together they cover all vertices of $G$, we have $r(D)>0$.  Some quick case-checking verifies that $\gon(G)>2$, so $G$ is a graph of gonality $3$ such that no multiplicity-free divisor achieves gonality.  Indeed,  generalizing to the \(2\times m\) slashed ladder graph, we can find examples of graphs with gonality \(3\) and multiplicity-free gonality at least (and in fact equal to) \(m\).

\end{example}

There also exist regular simple graphs with a gap between gonality and multiplicity-free gonality.

\begin{example}\label{example:antiprism}

\begin{figure}[hbt]
   		 \centering
\includegraphics[scale=0.8]{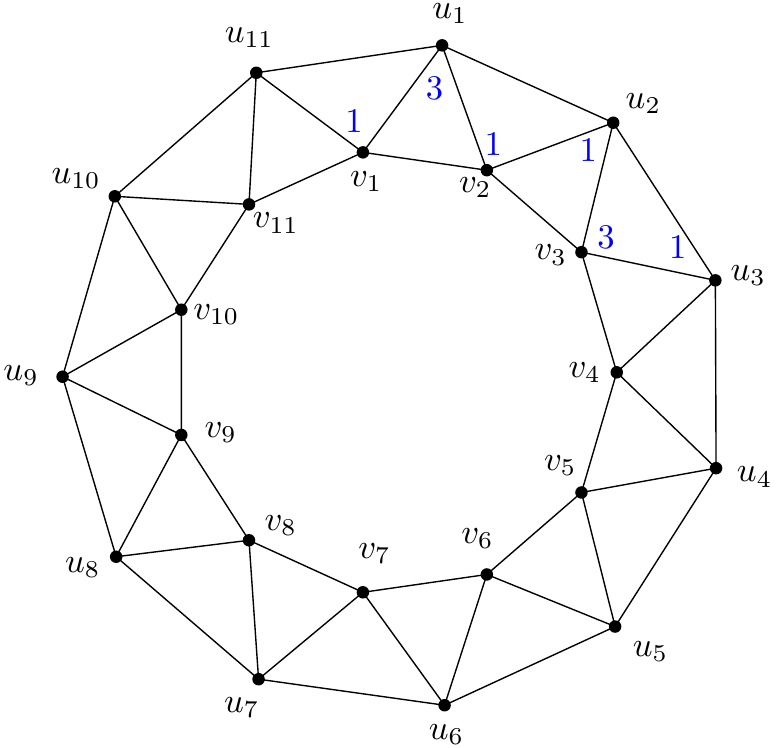}
	\caption{The antiprism on $11$ vertices, and a degree $10$ divisor achieving gonality}
	\label{figure:antiprism}
\end{figure}

The \emph{antiprisms} are $4$-regular graphs that give us a gap between gonality and multiplicity-free gonality. 
Consider the antiprism $\mathfrak{A}_{11}$ on $2\cdot 11=22$ vertices pictured in Figure \ref{figure:antiprism}.  This graph has vertices $u_1,\cdots,u_{11}$ and $v_1,\cdots,v_{11}$ arranged in two \(11\)-cycles, with $u_i$ attached to $v_i$ and $v_{i+1}$, working modulo $11$.  The divisor $3(u_1)+(u_2)+(u_3)+(v_1)+(v_2)+3(v_3)$ pictured has positive rank, as can be checked using Dhar's burning algorithm, implying that \(\gon(\mathfrak{A}_{11})\leq 10\).

We claim that no multiplicity-free divisor on $\mathfrak{A}_{11}$ has degree \(10\) or less, implying \(\gon(G)<\mfgon(G)\).  Let $D$ be an effective multiplicity-free divisor on  $\mathfrak{A}_{11}$ of degree $10$.  Since there are $11$ $\{u_i,v_i\}$ pairs, at least one such pair has no chips on it.  Choose such a pair, and run Dhar's burning algorithm on $D-(u_i)$.  Certainly $v_i$ will burn as well.  Letting $j=i\pm 1$, we claim that $u_j$ and $v_j$ now burn as well.  Indeed, one of them has two burning edges coming from the pair $\{u_i,v_i\}$, so it will burn; and then the other has one burning edge from $\{u_i,v_i\}$ and one from the other element of $\{u_j,v_j\}$.  Thus the fire spreads through the whole graph, implying that $r(D)=0$.  Thus $\mathfrak{A}_{11}$ is a $4$-regular graph such that no multiplicity-free divisor on it achieves gonality.

For a $5$-regular graph, we can add more edges to the antiprism.  In addition to connecting $u_i$ to $v_i$ and $v_{i+1}$, connect $u_i$ to $v_{i-1}$ (again working cyclically).  Building such a graph $G$ on $2\cdot 9=18$ vertices, we see that $\gon(G)\leq 8$, since the divisor illustrated in Figure \ref{figure:5-regular} has positive rank.  However, an argument identical to that for the antiprism shows that any multiplicity-free divisor of degree $8$ or less has rank $0$.  Thus this is a $5$-regular graph with multiplicity-free gonality strictly larger than gonality.  We will see in Proposition \ref{proposition:regular} that for any $r\geq 4$, there exist simple (and non-simple) $r$-regular graphs with a gap between gonality and multiplicity-free gonality.

\begin{figure}[hbt]
   		 \centering
\includegraphics[scale=1.2]{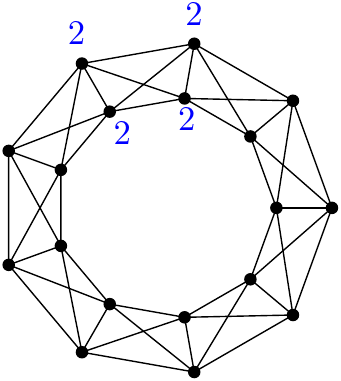}
	\caption{A $5$-regular graph with a positive rank divisor of degree $8$}
	\label{figure:5-regular}
\end{figure}

\end{example}

We now present several useful lemmas on multiplicity-free gonality.

\begin{lemma}\label{lemma:all_multiedges}
Let $G$ be a graph on \(n\) vertices such that every pair of vertices sharing at least $1$ edge share at least $2$ edges.  Then \(\textrm{mfgon}(G)=n\).
\end{lemma}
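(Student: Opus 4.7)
The plan is to bracket $\mfgon(G)$ both above and below by $n$, with the upper bound essentially trivial and the lower bound leveraging the multi-edge hypothesis.

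For the upper bound, I would exhibit the all-ones divisor $D_0 := \sum_{v \in V(G)} (v)$. This is multiplicity-free of degree $n$, and for any vertex $w$ the divisor $D_0 - (w)$ has a $0$ at $w$ and a $1$ at every other vertex, which is already effective. Hence $r(D_0) \geq 1$, giving $\mfgon(G) \leq n$.

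For the matching lower bound, I want to show that any multiplicity-free divisor $D$ of degree at most $n-1$ has $r(D) \leq 0$. Since $D$ is multiplicity-free and $\deg(D) < n$, some vertex $q$ has $D(q) = 0$. My strategy is to show that $D - (q)$, which has value $-1$ at $q$, is already $q$-reduced; the uniqueness of $q$-reduced divisors will then imply $D - (q)$ is not equivalent to any effective divisor, so $D$ cannot eliminate the unit of debt placed at $q$ and $r(D) < 1$.

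The heart of the argument---and the step I expect to be the main (mild) obstacle---is ruling out a nonempty legal firing set $U \subseteq V(G) - q$ for $D - (q)$. For any such $U$, each $u \in U$ has at most $D(u) \leq 1$ chip available, so the number of edges from $u$ to $U^C$ must be at most $1$. But the multi-edge hypothesis says that if $u$ has any neighbor $w \in U^C$, then $u$ and $w$ share at least two edges, forcing $u$ to contribute at least two edges to the outdegree of $U$. Combining these two conditions, every $u \in U$ must have no neighbor in $U^C$ at all; connectivity of $G$ then forces $U = \emptyset$ or $U = V(G)$, neither of which is compatible with $\emptyset \neq U \subseteq V(G) - q$. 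Thus no such $U$ exists, $D - (q)$ is $q$-reduced, and the lower bound follows.
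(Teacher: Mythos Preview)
Your proposal is correct and is essentially the same argument as the paper's, phrased slightly differently. The paper runs Dhar's burning algorithm from the chipless vertex and observes that whenever a vertex burns, each neighbor receives at least two burning edges against at most one chip, so the fire propagates through the connected graph; you instead verify directly that no nonempty $U\subseteq V(G)-q$ can be a legal firing move, which is exactly the condition Dhar's algorithm is testing. Both routes certify that $D-(q)$ is $q$-reduced and non-effective, giving $r(D)\leq 0$.
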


\begin{proof}

Let \(D\) be a multiplicity-free divisor on \(G\) with \(\deg(D)<n\).  At least one vertex $v\in V(G)$ has no chip from $D$.  Run Dhar's burning algorithm on $D-(v)$.  Anytime a vertex burns, all of its neighbors will burn, since they will have at least two incident burning edges, and each vertex has at most $1$ chip on it.  Since $G$ is connected, the whole graph will burn.  It follows that $r(D)=0$, implying that \(\mfgon(G)\geq n\).  Since placing \(1\) chip on every vertex gives a positive rank divisor, we have \(\mfgon(G)\leq n\), completing the proof.
\end{proof}

A set \(S\subset V(G)\) is called an \emph{independent set} if no two vertices of \(S\) share an edge.  The \emph{indpendence number} of \(G\), denoted \(\alpha(G)\), is the maximum possible size of an independent set.

\begin{lemma}\label{lemma:simple_independence}
If \(G\) is a simple graph, then \(\mfgon(G)\leq n-\alpha(G)\).
\end{lemma}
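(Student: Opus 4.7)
The plan is to produce an explicit multiplicity-free divisor of degree $n - \alpha(G)$ with positive rank. Let $S \subseteq V(G)$ be a maximum independent set, so $|S| = \alpha(G)$, and define
\[
D = \sum_{v \in V(G) \setminus S}(v),
\]
which is a multiplicity-free effective divisor of degree $n - \alpha(G)$. I would then aim to show $r(D) \geq 1$, which is enough since it forces $\mfgon(G) \leq \deg(D) = n - \alpha(G)$.

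To verify $r(D) \geq 1$, I need to check that for every $v \in V(G)$, the divisor $D - (v)$ is equivalent to an effective divisor. If $v \in V(G) \setminus S$, then $D(v) = 1$, so $D - (v)$ is already effective and there is nothing to do. The substantive case is $v \in S$, where $D - (v)$ carries $-1$ chips on $v$, $1$ chip on each vertex of $V(G) \setminus S$, and $0$ chips on each vertex of $S \setminus \{v\}$. Here I would fire the subset $U = V(G) \setminus \{v\}$ (equivalently, reverse-fire the single vertex $v$). Under this firing, $v$ gains $\val(v)$ chips, every neighbor $u$ of $v$ loses exactly one chip, and every non-neighbor of $v$ in $U$ loses zero chips.

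The crucial observation, which I expect to be the only real content of the argument, is that since $S$ is independent and $v \in S$, every neighbor $u$ of $v$ lies in $V(G) \setminus S$ and hence satisfies $D(u) = 1$. Thus after the firing such a $u$ ends with $0$ chips, while $v$ ends with $\val(v) - 1 \geq 0$ chips (using that $G$ is connected, so $\val(v) \geq 1$), and all other vertices retain their non-negative $D$-values. The resulting divisor is therefore effective, so $D - (v) \sim \text{effective}$, completing the proof that $r(D) \geq 1$ and yielding $\mfgon(G) \leq n - \alpha(G)$.

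There is no significant obstacle: once one chooses the divisor $D$ to be the indicator of the complement of an independent set, the independence of $S$ is exactly what makes firing the complement of $\{v\}$ into a debt-clearing move for every $v \in S$. The only mild subtlety is recognizing that this single firing suffices — no iterated chip-firing or Dhar's algorithm is needed, which is what makes the proof clean.
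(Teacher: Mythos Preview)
Your proof is correct and follows essentially the same approach as the paper: both take \(D\) to be the indicator of the complement of a maximum independent set \(S\), and for \(v\in S\) both fire the set \(\{v\}^C\), using simplicity and the independence of \(S\) to guarantee that each neighbor of \(v\) loses exactly one chip from its starting value of \(1\). Your write-up just spells out the effectiveness check in slightly more detail than the paper, which instead cites \cite[Theorem 3.1]{random_graphs} for the positive-rank claim.
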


\begin{proof}
Let \(S\) be an independent set with \(|S|=\alpha(G)\), and consider the multiplicity-free divisor \(D\) with \(D(v)=0\) for \(v\in S\) and \(D(v)=1\) for \(v\notin S\).  As proved in \cite[Theorem 3.1]{random_graphs}, this divisor has positive rank; to see this, note that for \(v\in S\), chip-firing the set \(\{v\}^C\) moves chips onto \(v\) without introducing new debt, since each neighbor of \(v\) has $1$ chip and is connected to \(v\) by exactly one edge.  Since \(D\) is multiplicity-free, we have \(\mfgon(G)\leq\deg(D)= n-\alpha(G)\).
\end{proof}

We close this section by remarking on a possible generalization of multiplicity-free gonality, leaving it as a direction for future research.  Just as the gonality of a graph \(G\) can be defined as the minimum degree of a divisor with rank at least \(1\), for any positive integer \(r\) we can define the \(r^{th}\) gonality of a graph \(G\) to be the minimum degree of a divisor with rank at least \(r\).  This number is denoted \(\gon_r(G)\).

It is natural to try to define \(\mfgon_r(G)\) as the minimum degree of a multiplicity-free divisor with rank at least \(r\); however, this number is not always well-defined.  This is because there are only finitely many multiplicity-free divisors on a graph, and thus the maximum rank of such a divisor is bounded.  Thus we define  \(\mfgon_r(G)\) as the minimum degree of a multiplicity-free divisor with rank at least \(r\) if such a divisor exists, and \(\infty\) otherwise.  A natural question then becomes:

\begin{question}
Given a graph \(G\), for what values \(r\) do we have \(\mfgon_r(G)<\infty\)?
\end{question}

Note that the maximum rank of a multiplicity-free divisor is achieved by the divisor \(D_\mathbbm{1}\) that places \(1\) chip on every vertex.  Thus, \(\mfgon_r(G)<\infty\) if and only if \(r\leq r(D_\mathbbm{1})\).  So really, the question is: given a graph \(G\), what is \(r(D_\mathbbm{1})\)?

We answer this question in a few cases.  Recall that \(g=|E(G)|-|V(G)|+1\) is the first Betti number of the graph.

\begin{itemize}
    \item If \(G\) is a tree, then  \(r(D_\mathbbm{1})=\deg(D_\mathbbm{1})=|V(G)|\); and if \(G\) is a cycle, then  \(r(D_\mathbbm{1})=\deg(D_\mathbbm{1})-1=|V(G)|-1\).  This is because for a graph with \(g=0\) (i.e. a tree), the rank of  a divisor is equal to its degree; and for a graph with \(g=1\), the rank of an effective divisor is one less than its degree.

    \item  If \(G\) is a simple graph, we have \(r(D_\mathbbm{1})\geq 2\):  the only non-effective divisor of the form \(D_\mathbbm{1}-E\) where \(E\geq 0\) and \(\deg(E)=2\) is \(D_\mathbbm{1}-2(v)\) for some vertex \(v\); this divisor can be made effective by chip-firing all vertices but \(v\).  On the flip side, if \(G\) is a multigraph where every pair of incident vertices are connected by \(2\) or more edges, then \(r(D_\mathbbm{1})=1\); this is because running Dhar's burning algorithm on \(D_\mathbbm{1}-2(v)\) burns the whole graph (the proof is similar to that of Lemma \ref{lemma:all_multiedges}).  Thus any simple graph has \(\mfgon_2(G)\leq |V(G)|\), while any graph with all edges multiedges has \(\mfgon_2(G)=\infty\).

      \item  If \(G\) is a \(3\)-regular graph, then \(D_\mathbbm{1}\) is \(K\), the canonical divisor on \(G\) from the Riemann-Roch Theorem for graphs.  By that theorem, we know that \(r(K)=2g-2\).  Thus for any \(3\)-regular graph we have \(\mfgon_r(G)<\infty\) if and only if \(r\leq 2g-2\).
    
    \item  More generally, if \(G\) is a \(k\)-regular graph, then \(D_\mathbbm{1}\) is a divisor \(D\) such that \(kD=K\).  Note that \(2g-2=r(K)=r(kD)\geq kr(D)\), so we have \(r(D)\leq (2g-2)/k\).  
    
    However, for \(k>3\), \(r(D_\mathbbm{1})\) need not be determined by \(g\).  Consider the two \(4\)-regular graphs with \(g=7\) in Figure \ref{figure:two_genus_seven}.  As previously argued, the simple graph has \(r(D_\mathbbm{1})>2\), while the graph with all edges multiedges has \(r(D_\mathbbm{1})=1\).
    
\end{itemize}

\begin{figure}[hbt]
    \centering
    \includegraphics{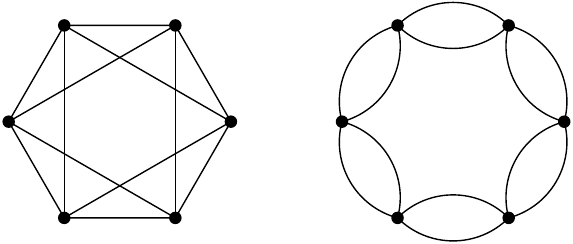}
    \caption{Two  \(4\)-regular graphs with \(g=7\) with different values for the rank of \(D_\mathbbm{1}\)}
    \label{figure:two_genus_seven}
\end{figure}
A more thorough study of this question would be an interesting direction for future work.

\section{A condition for equality}  
\label{section:equality}

In this section we provide a sufficient condition for a graph to have gonality equal to multiplicity-free gonality.  We start with the following lemma.

\begin{lemma} \label{eqlemma}
Let $G$ be a $k$-connected simple graph, and let $U \subset V(G)$ be a set of vertices with $2 \leq |U| \leq k-1$.  Then the outdegree of $U$ is at least $k+1$.
\end{lemma}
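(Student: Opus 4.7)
My plan is to bound the outdegree of $U$ through a direct valence-counting argument. Since $G$ is $k$-connected, every vertex has valence at least $k$ (a vertex of valence less than $k$ would have its neighborhood serve as a separator of size less than $k$). Summing valences over $U$ therefore gives $\sum_{v \in U} \val(v) \geq k|U|$. On the other hand, this sum counts each edge with both endpoints in $U$ twice and each edge from $U$ to $U^C$ exactly once, so if $e(U)$ denotes the number of edges inside $U$, then $2e(U) + \textrm{outdeg}(U) \geq k|U|$.

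This is where the simplicity of $G$ enters: since there is at most one edge between any two vertices of $U$, we have $e(U) \leq \binom{|U|}{2}$. Substituting yields
\[
\textrm{outdeg}(U) \geq k|U| - 2 \binom{|U|}{2} = |U|(k - |U| + 1).
\]

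It then remains to verify that $f(u) := u(k - u + 1) \geq k + 1$ for every integer $u$ with $2 \leq u \leq k-1$. Since $f(u) = (k+1)u - u^2$ is concave as a function of $u$, its minimum on the interval $[2, k-1]$ is attained at one of the endpoints, and $f(2) = f(k-1) = 2(k-1) = 2k - 2$. The range $2 \leq u \leq k-1$ is nonempty only when $k \geq 3$, in which case $2k - 2 \geq k + 1$, as desired.

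The argument is short and involves no real obstacle; the only subtlety worth flagging is that the hypothesis that $G$ is simple is essential, since it is what lets us bound $e(U)$ by $\binom{|U|}{2}$. For multigraphs one could pack many parallel edges inside $U$, consuming the valence budget without contributing to the outdegree, and the lemma would fail.
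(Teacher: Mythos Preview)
Your proof is correct and follows essentially the same approach as the paper: both bound the outdegree via $\sum_{v\in U}\val(v)-2e(U)\geq k|U|-|U|(|U|-1)$ using the minimum-valence bound and simplicity, and then verify the resulting quadratic inequality. The only cosmetic difference is in that last step, where the paper factors $(k-|U|)(|U|-1)\geq 1$ directly while you check the concave function at its endpoints.
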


\begin{proof}
The outdegree of \(U\) can be computed as the total valence of the vertices in \(U\), minus twice the number \(e\)  of edges between vertices of \(U\) due to double-counting; thus the outdegree is
\[\textrm{outdeg}(U)=\sum_{v\in U}\textrm{val}(v)-2e.\]
As $G$ is a $k$-connected graph, any vertex in $G$ must be incident to at least $k$ edges, since deleting all neighbors of a vertex will disconnect the graph. Thus \(\sum_{v\in U}\textrm{val}(v)\geq k|U|\).  On the other hand, since \(G\) is simple, the total number of edges in \(U\) is at most \(\binom{|U|}{2}=\frac{|U|(|U|-1)}{2}\).  Thus we have
\[\textrm{outdeg}(U)\geq k|U|-|U|(|U|-1).\]
To show that  \(k|U|-|U|(|U|-1)\) is at least $k+1$, it is equivalent to show that
\[k \cdot \left(|U|-1\right) - |U| \cdot \left(|U|-1\right) \geq 1\]
since we can subtract $k$ from both sides.  Factoring transforms this expression into the following:
\[\left(k- |U|\right)\left(|U| - 1\right) \geq 1.\]
Since $k > |U|$ and $|U| > 1$, both $k-|U|$ and $|U| - 1$ are positive integers, so this inequality holds for all values of $k$ and $|U|$. This completes the proof.
\end{proof}

Now that we have this lemma, we can prove the following proposition:

\begin{proposition}\label{prop:k_gon_mfgon}
For any simple graph $G$, if $\gon(G) = \kappa(G)$, then\ \(\gon(G)=\mfgon(G)\).
\end{proposition}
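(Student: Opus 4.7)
The plan is to prove $\mfgon(G) \leq k$, where $k = \gon(G) = \kappa(G)$; combined with the trivial inequality $\gon(G) \leq \mfgon(G)$ this gives the desired equality. I would begin with any positive-rank divisor $D$ of degree $k$ and replace it by its $q$-reduced representative for some vertex $q$; positive rank forces $D(q) \geq 1$. Suppose for contradiction that $D$ is not multiplicity-free, so $|\supp(D)| \leq k-1$. The aim is to show that the only surviving possibility is $D = k(q)$ with $\val(q) = k$, in which case firing $\{q\}$ yields the multiplicity-free divisor $\sum_{w \sim q}(w)$, equivalent to $D$ and hence of positive rank and degree $k$.

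To extract structural constraints, I would pick any $u \in V(G) \setminus \supp(D)$ (such $u$ exists since $|\supp(D)| \leq k-1 < n$) and run Dhar's burning algorithm starting at $u$ on $D$. Positive rank together with Lemma \ref{lemma:dhars_q_v} forces $q$ to remain unburned; let $U$ denote the unburned set, so $q \in U$ and $u \notin U$. Dhar's stopping condition yields $|N(v) \cap U^C| \leq D(v)$ for each $v \in U$, so summing produces
\[\textrm{outdeg}(U) \;=\; \sum_{v \in U} |N(v) \cap U^C| \;\leq\; \sum_{v \in U} D(v) \;\leq\; k.\]
Since $k$-vertex-connectivity implies $k$-edge-connectivity, we also have $\textrm{outdeg}(U) \geq k$, forcing the equalities $\textrm{outdeg}(U) = \sum_{v \in U} D(v) = k$. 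In particular $\supp(D) \subseteq U$ and $D(v) = |N(v) \cap U^C|$ for every $v \in U$.

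From here I would case-split on $|U|$. If $|U| = 1$, then $U = \{q\}$, which forces $D = k(q)$ and $\val(q) = k$; firing $q$ then produces the desired multiplicity-free divisor. If $2 \leq |U| \leq k-1$, Lemma \ref{eqlemma} gives $\textrm{outdeg}(U) \geq k + 1$, contradicting the equality above. If $|U| \geq k$, set $W = U \setminus \supp(D)$; since $D(v) = 0$ forces $|N(v) \cap U^C| = 0$ for each $v \in W$, no edge of $G$ leaves $W$ for $U^C$. If $W \neq \emptyset$, then $\supp(D)$ separates $W$ from the nonempty set $U^C$ in $G$, making $\supp(D)$ a vertex cut of size at most $k - 1$, contradicting $\kappa(G) = k$; and if $W = \emptyset$, then $|U| = |\supp(D)| \leq k-1 < k$, impossible. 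The hard part will be exactly this last case, where one must recognize $\supp(D)$ itself as a vertex cut in order to bridge the structural information forced by Dhar's algorithm with the $k$-connectivity hypothesis.
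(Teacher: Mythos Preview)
Your argument is correct and rests on the same two ingredients as the paper's proof: Dhar's burning algorithm and Lemma~\ref{eqlemma}. The organization differs slightly. The paper cases on $\ell=|\supp(D)|$ and, for $2\le\ell\le k-1$, observes directly that $G-\supp(D)$ is connected (since $|\supp(D)|<\kappa(G)$), so every chipless vertex burns and hence $U\subseteq\supp(D)$; this immediately bounds $|U|\le k-1$, sidestepping your large-$|U|$ case entirely. You instead invoke Whitney's inequality $\kappa(G)\le\lambda(G)$ to pin down $\textrm{outdeg}(U)=k$ exactly, and then dispatch $|U|\ge k$ by recognizing $\supp(D)$ as a separating set between $W=U\setminus\supp(D)$ and $U^C$. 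These are really the same connectivity fact viewed from opposite ends, so the proofs are equivalent in substance. Your use of edge-connectivity does have the pleasant side effect of yielding $\val(q)=k$ directly in the $|U|=1$ case, whereas the paper argues $\val(q)\le k$ and $\val(q)\ge k$ separately. One cosmetic point: your ``suppose for contradiction'' framing is slightly off, since the case $|U|=1$ does not produce a contradiction but rather the desired multiplicity-free divisor; the logic is fine, but the phrasing would be cleaner as ``suppose $D$ is not multiplicity-free; we show $D=k(q)$ with $\val(q)=k$.''
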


\begin{proof} 

If $\kappa(G) = \gon(G) = 1$, then $G$ is a tree and we are done.  Henceforth we assume that $\kappa(G) \geq 2$.  Since $\gon(G) = \kappa(G) = k$, there exists an effective divisor \(D\) of degree \(k\) and positive rank.

Let \(S\) be the set of vertices on which \(D\) places chips, and let \(\ell=|S|\).  Suppose for the sake of contradiction that \(2\leq \ell-1\leq k-1\).  Let \(q\) be a vertex with no chip, and run Dhar's burning algorithm on \(D-(q)\).  Since $\kappa(G) = k$, removing the $\ell \leq k-1$ vertices of $S$ won't disconnect the graph, so every vertex not in \(S\) will burn.  Since \(r(D)>0\), the burning process stops before the entire set $S$ burns. Call the set of unburned vertices $U$. If $|U| = 1$, then the single unburned would need $k$ chips on it, contradicting the condition that $|S| \geq 2$, since $U \subseteq S$.  Thus we have $2 \leq |U| \leq k-1$.  But by Lemma \ref{eqlemma}, $U$ has outdegree at least $k+1$, so it requires at least $k+1$ chips for no vertices in it to burn, a contradiction to \(\deg(D)=k\).

Thus \(\ell=1\) or \(\ell=k\).  If \(\ell=k\), then \(D\) is multiplicity-free and we are done.   If \(\ell=1\), then \(D\) places all chips on a single vertex \(v\).  Since \(\kappa(G)\geq 2\), we know that \(G-v\) is connected.  Choose any vertex \(w\neq v\), and run Dhar's burning algorithm on \(D-(w)\).  The whole graph besides \(v\) will burn, since \(G-v\) is connected; but \(v\) will not burn since \(r(D)>0\).  Thus it is possible to chip-fire \(v\) by itself without introducing debt.  This means \(v\) is incident to at most \(k\) vertices; but since \(\kappa(G)=k\), it is also incident to at least \(k\) vertices, so it must be incident to exactly \(k\) vertices.  Thus chip-firing \(v\) turns \(D\) into a multiplicity-free divisor, with one chip on each of the \(k\) neighbors of \(v\). Either way, there exists a multiplicity-free divisor on \(G\) achieving gonality.

\end{proof}

There are many familiar families of graphs to which Proposition \ref{prop:k_gon_mfgon} applies, including:
\begin{itemize}
    \item trees, which have \(\kappa(G)=\gon(G)=1\) \cite[Lemma 1.1]{bn2} (indeed, that lemma proves that the trees are the only graphs of gonality \(1\));
    \item complete multipartite graphs \(K_{n_1,\ldots,n_\ell}\), which have \(\kappa(G)=\gon(G)=\left(\sum_{i=1}^\ell n_i\right)-\max_i n_i\) \cite[Example 4.3]{treewidth};
    \item  cycle graphs, which have \(\kappa(G)=\gon(G)=2\) \cite[\S 4.2]{db}. 
\end{itemize}

In fact, the last example falls into a more general class of graphs with gonality equal to multiplicity-free gonality:

\begin{proposition}\label{prop:hyp_simple}
If \(G\) is a simple graph, then \(\gon(G)=2\) if and only if \(\mfgon(G)=2\).
\end{proposition}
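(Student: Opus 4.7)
My plan is to handle the two directions separately. The backward implication is quick: $\gon(G)\le\mfgon(G)$ gives $\gon(G)\le 2$, and $\gon(G)=1$ would force $G$ to be a tree, for which placing a single chip already yields a positive-rank multiplicity-free divisor (so $\mfgon(G)=1$). Hence $\mfgon(G)=2$ forces $\gon(G)=2$.

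For the forward direction, let $D$ be a positive-rank effective divisor of degree $2$ witnessing $\gon(G)=2$. If $D=(u)+(w)$ with $u\ne w$, then $D$ itself is multiplicity-free, so assume $D=2(v)$. I split by the valence of $v$. If $\val(v)\le 2$, firing $\{v\}$ produces $(2-\val(v))(v)+\sum_{u\sim v}(u)$; since $G$ is simple the neighbors of $v$ are distinct vertices, each receiving exactly one chip, so the result is multiplicity-free and equivalent to $D$.

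The delicate case is $\val(v)\ge 3$, where I would use Dhar's burning algorithm to extract structural information from $r(D)\ge 1$ and then locate a single legal firing that produces a multiplicity-free equivalent. First, $v$ must be a cut vertex: if $G-v$ were connected, running Dhar from any $q\ne v$ on $2(v)$ would burn all of $V(G)-v$ (these vertices carry no chips and are reached by fire through the connected $G-v$), and then $v$ would burn because it has $\val(v)\ge 3>2$ incident burning edges, showing $2(v)$ is $q$-reduced with value $0$ at $q$, contradicting $r(D)\ge 1$. Letting $C_1,\dots,C_k$ be the components of $G-v$ and $N_i=N(v)\cap C_i$, a parallel argument shows some $|N_j|\le 2$: if every $|N_i|\ge 3$, then for any $q\in C_i$ the fire burns $C_i$, then $v$ (since $|N_i|\ge 3>D(v)$), then the other components through $v$, again contradicting positive rank. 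Note $|N_j|\ge 1$ by connectedness of $G$.

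With such a $j$ in hand, the proof finishes by firing $U=V(G)\setminus C_j$. The only edges leaving $U$ are the $|N_j|$ edges from $v$ to $C_j$, so the only vertex of $U$ that loses chips is $v$, going from $2$ to $2-|N_j|\ge 0$; every other vertex of $U$ has all its neighbors in $U$ and stays at $0$. The resulting divisor places $2-|N_j|\in\{0,1\}$ chips on $v$ and one chip on each of the $|N_j|\in\{1,2\}$ vertices of $N_j$, hence is multiplicity-free (using that $G$ is simple so $v\notin N_j$ and the $N_j$-vertices are distinct) and equivalent to $D$. The main obstacle is this valence-$\ge 3$ case: squeezing both the cut-vertex structure and the bound $|N_j|\le 2$ out of positive rank via Dhar is the heart of the matter; once these are in place, the legal firing is essentially forced.
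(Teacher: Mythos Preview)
Your proof is correct. Both directions go through, and the delicate case $\val(v)\ge 3$ is handled cleanly: the positive-rank hypothesis forces $v$ to be a cut vertex with some component $C_j$ attached by at most two edges, and firing $V(G)\setminus C_j$ produces a multiplicity-free equivalent.

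The paper's argument is shorter and avoids the valence case split and the structural analysis entirely. It simply picks any $q\ne v$, runs one iteration of Dhar's burning algorithm on $D=2(v)$ from $q$, and observes that since $r(D)\ge 1$ the process cannot burn the whole graph. The unburned set $U$ is then a legal firing move; necessarily $v\in U$ (any other unburned vertex would have zero chips and a burning neighbor), and at least one but at most two edges leave $U$ at $v$. Firing $U$ therefore moves one or two chips off $v$ to distinct neighbors (by simplicity), yielding a multiplicity-free divisor in one stroke. In effect, Dhar's algorithm automatically discovers the set you build by hand: if $q$ happens to lie in your $C_j$, the unburned set is exactly your $V(G)\setminus C_j$. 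Your approach has the virtue of making the structure of $G$ around $v$ completely explicit, whereas the paper's approach trades that transparency for brevity by letting the burning process do the work.
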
  

\begin{proof}
If \(\mfgon(G)=2\), then \(G\) is not a tree, so \(1<\gon(G)\leq \mfgon(G)=2\).  It follows that \(\gon(G)=2\).

Now let \(G\) be simple with \(\gon(G)=2\), and let \(D=(u)+(v)\) be an effective divisor with \(\deg(D)=2\) and \(r(D)=1\).  If \(u\neq v\), then \(D\) is multiplicity-free, and we are done.  If not, choose any vertex \(q\) and run one iteration of Dhar's burning algorithm on \(D\) from \(q\).  Since \(r(D)\geq 1\), the whole graph will not burn, and so a subset \(U\) of chips will be made to fire.  Let \(D'\) be the divisor obtained by performing this subset-firing move.  At least one chip moved from \(u\), and even if both chips moved they could not have moved to the same vertex, since the graph is simple.  Thus \(D'\) is a multiplicity-free divisor of rank \(1\) and degree \(2\), implying that \(\mfgon(G)=2\).
\end{proof}

\section{Multiplicity-free gonality cannot be bounded by gonality}
\label{section:inequality}
We have that \(\gon(G)=1\) if and only if \(\mfgon(G)=1\) if and only if \(G\) is a tree; and for simple graphs that \(\gon(G)=2\) if and only if \(\mfgon(G)=2\) by Proposition \ref{prop:hyp_simple}.  In this section we will prove that these are the only cases in which gonality determines multiplicity-free gonality, or even in which some function of gonality can bound multiplicity-free gonality.

\begin{proposition}
If \(2\leq i\leq j\), then there exists a multigraph with \(\gon(G)=i\) and \(\mfgon(G)=j\).
\end{proposition}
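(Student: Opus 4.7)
The plan is to construct, for each pair $2 \le i \le j$, an explicit multigraph $G_{i,j}$ on $j$ vertices with $\gon(G_{i,j}) = i$ and $\mfgon(G_{i,j}) = j$. The construction splits by $i$: for $i \in \{2,3\}$, take $G_{i,j}$ to be the path $v_1 v_2 \cdots v_j$ where the edge $v_1 v_2$ carries $i$ parallel edges and every other consecutive pair carries $2$; for $i \ge 4$, take vertex set $\{v_1, v_2, w_1, \ldots, w_{j-2}\}$ with $v_1 v_2$ carrying $i$ parallel edges and each bridge $w_k$ joined to $v_1$ by $\lceil i/2 \rceil$ edges and to $v_2$ by $\lfloor i/2 \rfloor$ edges. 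In every case every pair of adjacent vertices is joined by at least $2$ edges, so Lemma~\ref{lemma:all_multiedges} immediately gives $\mfgon(G_{i,j}) = j$.

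For the upper bound $\gon(G_{i,j}) \le i$, I exhibit a positive-rank divisor of degree $i$. In the path cases, $D = i(v_1)$ works: firing $v_1$ (legal since $\val(v_1) = i$) yields $i(v_2)$, after which successive legal firings of $\{v_1, \ldots, v_k\}$ propagate two chips down the chain to produce $D \sim (v_2) + 2(v_k)$ for each $k \ge 3$, and subtracting any single vertex leaves an effective divisor. In the case $i \ge 4$, the divisor $D = \lceil i/2 \rceil(v_1) + \lfloor i/2 \rfloor(v_2)$ has positive rank: for any bridge $w_k$, simultaneously firing $V(G_{i,j}) \setminus \{w_k\}$ is legal (each of $v_1, v_2$ loses exactly $\lceil i/2 \rceil$ and $\lfloor i/2 \rfloor$ chips, respectively, and each other bridge has net change $0$) and sends $i$ chips to $w_k$, so $D - (w_k) \sim (i-1)(w_k)$, while $D-(v_1), D-(v_2)$ are directly effective.

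For the lower bound $\gon(G_{i,j}) \ge i$, I suppose toward contradiction that some effective $D$ of degree at most $i-1$ has positive rank, so for each vertex $v$ the reduced divisor $D_v$ satisfies $D_v(v) \ge 1$. In the path cases, I run Dhar's burning algorithm at both $v_1$ and $v_j$: the resulting constraints on $D_{v_1}$ and $D_{v_j}$ force the chips to concentrate near one endpoint in very few possible patterns, each of which fails the rank condition at the opposite endpoint. In the case $i \ge 4$, I pick a zero-chip vertex $v_0$ and translate the existence of an effective $D' \sim D$ with $D'(v_0) \ge 1$ into a linear integer system on the firing vector; the key arithmetic input is that for $j \ge i \ge 4$ we have $(j-2)\lceil i/2 \rceil \ge i$, which together with the large valences of $v_1$ and $v_2$ forces any feasible firing vector to require at least $\lceil i/2 \rceil$ chips on $v_1$ and $\lfloor i/2 \rfloor$ on $v_2$, totaling at least $i$ and contradicting $\deg(D) \le i-1$.

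The main obstacle is the $i \ge 4$ lower bound, where the linear-inequality analysis must range over all possible firing sets $S \subset V(G_{i,j})$. The unifying observation is that the bridge multiplicities $\lceil i/2 \rceil, \lfloor i/2 \rfloor$ are chosen so that each bridge has valence exactly $i$ and the cut between any bridge and $\{v_1, v_2\}$ has outdegree exactly $i$; the hub valences are simultaneously too large for partial firings to be legal at degree less than $i$, saturating the gonality at exactly $i$.
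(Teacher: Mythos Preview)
Your constructions are correct and yield the claimed gonalities, but your route differs substantially from the paper's. The paper uses a single uniform family: the multipath on $j$ vertices with $i$ parallel edges between \emph{every} consecutive pair. For that graph $\gon=\min(i,j)=i$ by a result cited from \cite{gonality_sequences}, and $\mfgon=j$ follows from Lemma~\ref{lemma:all_multiedges} exactly as you use it. So the paper's proof is two lines, trading a direct argument for an external citation. Your approach is self-contained, which is a genuine advantage, but it forces the case split: your asymmetric path (only the first edge thickened to multiplicity $i$) really does fail for $i\ge4$, since e.g.\ when $i=4$ the divisor $(v_1)+2(v_2)$ already has positive rank and degree $3$. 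Your hub-and-bridges graph for $i\ge4$ is a clean fix, and the upper bounds and the $\mfgon$ computation are fine throughout.

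The one place your write-up is genuinely underspecified is the $i\ge4$ lower bound. Your firing-vector sketch asserts that feasibility forces $D(v_1)\ge\lceil i/2\rceil$ and $D(v_2)\ge\lfloor i/2\rfloor$, but this conclusion only emerges when the zero-chip vertex $v_0$ is a bridge $w_k$; when every bridge carries a chip (possible, since there are $j-2\ge i-2$ bridges and up to $i-1$ chips), the zero vertex must be $v_1$ or $v_2$, and your stated conclusion is then literally false for $D$. The repair is immediate and is cleaner via Dhar's burning algorithm than via firing-vector inequalities: if some $w_k$ has no chip, burn from $w_k$; then $v_1,v_2$ would together need $\lceil i/2\rceil+\lfloor i/2\rfloor=i$ chips to survive, so one of them burns, after which the other hub (now with $i$ additional incident burning edges) and then every remaining bridge (all $i$ incident edges burning) follow. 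If instead every $w_k$ carries a chip, then $D(v_1)+D(v_2)\le 1$, so one hub has no chip; burning from that hub immediately burns the other hub across the $i$ parallel edges, and then all bridges. Either way the graph burns and $r(D)=0$. This is what your final paragraph is gesturing at, and it replaces the linear system entirely.
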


\begin{proof}
Consider the multipath on \(j\) vertices, with \(i\) edges between each pair of adjacent vertices.  By \cite[\S 5]{gonality_sequences}, we have \(\gon(G)=\min(i,j)=i\); and by Lemma \ref{lemma:all_multiedges}, we have \(\mfgon(G)=j\).
\end{proof}

The corresponding result for simple graphs will take more work to prove.

\begin{theorem}\label{gon=mfgon}
For any integers $i, j$, with $3 \leq i \leq j$, there exists a simple graph $G$ with $\gon(G) = i$ and $\mfgon(G) = j$.
\end{theorem}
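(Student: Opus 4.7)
The plan is to construct, for each pair $(i, j)$ with $3 \leq i \leq j$, a simple graph $G = G(i, j)$ achieving $\gon(G) = i$ and $\mfgon(G) = j$, generalizing the $2 \times j$ slashed ladder of Example~\ref{example:first_one}. For $i \geq 4$ I would take an ``$(i-1) \times j$ thickened slashed ladder'': $j$ columns $C_t = \{v_t^{(1)}, \ldots, v_t^{(i-1)}\}$, each a clique $K_{i-1}$, with the matching $v_t^{(k)} \sim v_{t+1}^{(k)}$ between adjacent columns together with one or more slash edges (e.g.\ $v_t^{(1)} \sim v_{t+1}^{(2)}$) oriented so that the pair $(v_{t+1}^{(1)}, v_{t+1}^{(2)})$ receives two incident edges from $C_t$. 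The design makes both the Dhar's burning arguments and the chip-firing translations track the classical $SL_{2, j}$ case.

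For $\mfgon(G) \geq j$ I would apply pigeonhole: any multiplicity-free divisor $D$ of degree $j - 1$ leaves some column $C_{t_0}$ empty, and Dhar's algorithm starting in $C_{t_0}$ burns $C_{t_0}$ (a chipless $K_{i-1}$) and then propagates across columns, because each vertex in an adjacent column has at most one chip and at least two burning incident edges once its ``slash-endpoint'' neighbor burns; so by Lemma~\ref{lemma:dhars_q_v}, $r(D) = 0$. The same argument extends to $\gon(G) \geq i$: for an effective (not necessarily multiplicity-free) $D$ of degree $i - 1 < j$, pigeonhole still produces an empty column, and a counting refinement handles concentrated chip piles---the unburned subset $U \subseteq C_{t'}$ of size $u$ in any subsequent column must contain a vertex whose chip count is at most $i - u$ (otherwise the chip mass on $C_{t'}$ alone would exceed $\deg(D)$), while that vertex has at least $i - u$ burning incident edges (intra-column plus matching), so some vertex in $U$ always burns and propagation proceeds.

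For the matching upper bounds, the divisor $D_1 = 2 v_1^{(1)} + \sum_{k=2}^{i-1} v_1^{(k)}$ of degree $i$ chip-fires via firing of the full column $C_1$ to the algebraically equivalent divisor $D_2 = v_2^{(1)} + 2 v_2^{(2)} + \sum_{k=3}^{i-1} v_2^{(k)}$ concentrated in $C_2$, and iteratively one obtains an equivalent $D_t$ concentrated in each column (using, if necessary, combined firings $C_1 \cup \cdots \cup C_{t-1}$ that cancel residual debts in the intermediate columns, as in the $i = 3$ case). Since the collective supports of the $D_t$ cover every vertex, $r(D_1) \geq 1$, so $\gon(G) \leq i$; an analogous construction yields a multiplicity-free degree-$j$ divisor of positive rank, giving $\mfgon(G) \leq j$. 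The main obstacle is designing the slash pattern so that all four bounds are compatible at once: the equivalences $D_t \sim D_{t+1}$ for the $\gon \leq i$ bound are delicate---even in the $i = 3$ case they are realized by a combined column-firing rather than a single one---and for $i \geq 4$ the slash pattern may have to alternate direction or otherwise be tuned so that the residual terms in successive column-firings cancel correctly.
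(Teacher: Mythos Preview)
Your lower-bound arguments are reasonable, but the claimed upper bound $\mfgon(G)\leq j$ is false for your construction once $i\geq 4$, so the whole scheme collapses there. Thickening each column to a clique $K_{i-1}$ raises the multiplicity-free gonality along with the gonality: in a multiplicity-free divisor, any column with at most $i-3$ chips contains two chipless vertices, which ignite each other, after which every remaining vertex of that $K_{i-1}$ has two intra-column burning edges and at most one chip, so the entire column burns; from there your own propagation argument carries the fire across the ladder. Hence every column must carry at least $i-2$ chips, giving $\mfgon(G)\geq (i-2)j$, and the complement of a one-per-column independent set shows this is sharp. In particular, for $i=4$ and $j=4$ your graph already has $\mfgon(G)=8$, not $4$; the ``analogous construction'' you invoke for $\mfgon\leq j$ cannot exist. (Separately, the ``counting refinement'' for $\gon\geq i$ is not yet a proof: you need a vertex of $U$ with strictly fewer chips than incident burning edges, and the inequality $u(i-u)\leq i-1$ you implicitly use does not give that strict gap for all chip distributions.)

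The paper sidesteps this by decoupling the two parameters. Instead of thickening the ladder, it keeps the $2\times m$ slashed ladder intact and glues a single $K_n$ onto one end, overlapping in two vertices, to form $KL_{m,n}$. The $K_n$ alone forces $\gon(KL_{m,n})=n$ (via Lemma~\ref{lemma:complete_graph_burning} and a short $q$-reduced analysis), while the ladder columns remain width~$2$, so the multiplicity-free requirement is still just one chip per column together with $n-1$ chips on the $K_n$; this yields $\mfgon(KL_{m,n})=n+m-2$. Setting $n=i$, $m=j-i+2$ hits the target pair. The key structural point your approach misses is that the gonality-boosting gadget must be \emph{localized}; spreading $K_{i-1}$'s across every column inflates $\mfgon$ by the same factor it inflates $\gon$.
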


In order to prove this theorem, we introduce the \textit{complete slashed ladder graph}, $KL_{m,n}$, obtained by attaching a complete graph on \(n\) vertices to the end of a $2 \times m$ slashed ladder graph, overlapping on \(2\) vertices (throughout this section we assume \(m\geq 2\) and \(n\geq 3\)).  The complete slashed ladder \(KL_{6,5}\) is illustrated in Figure \ref{fig:comp-slashlad}.

\begin{figure}[hbtp]
    \centering 
    \includegraphics[scale = 0.8]{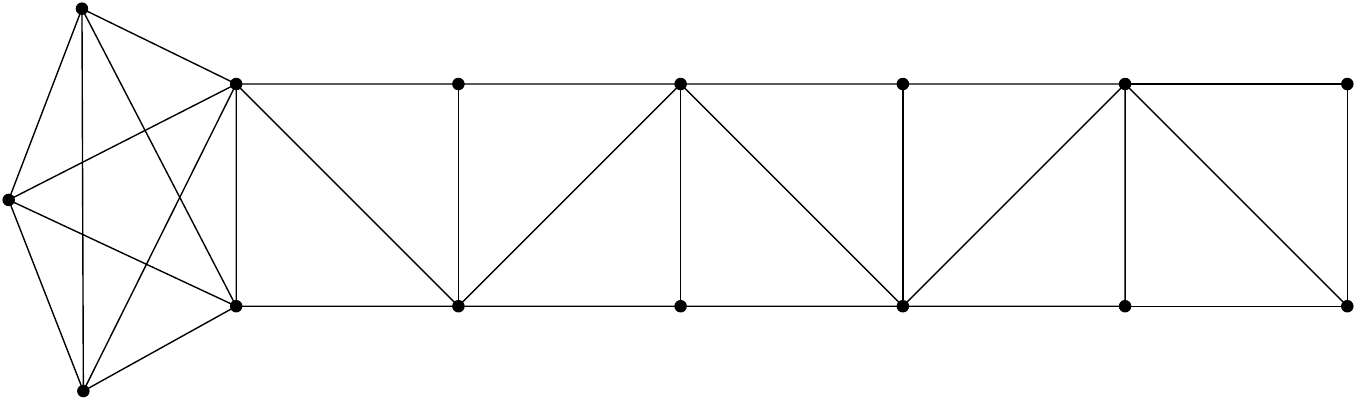}
    \caption{The complete slashed ladder $KL_{6,5}$.}
    \label{fig:comp-slashlad}
\end{figure}

\begin{lemma}
The complete slashed ladder graph $KL_{m,n}$ has gonality $n$.
\end{lemma}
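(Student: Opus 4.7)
The plan is to establish matching bounds $\gon(KL_{m,n}) \le n$ and $\gon(KL_{m,n}) \ge n$.

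For the upper bound, I would exhibit a specific divisor of degree $n$ with positive rank. Labeling the $n-2$ non-shared vertices of the $K_n$-subgraph as $w_1,\dots,w_{n-2}$, a natural candidate is
\[D \;=\; 2(u_1)+(v_1)+\sum_{i=1}^{n-3}(w_i),\]
of degree $n$. The vertices carrying a chip are handled trivially, and for $q$ in the slashed ladder I would invoke the sliding equivalence $2(u_1)+(v_1)\sim (u_2)+2(v_2)\sim 2(u_3)+(v_3)\sim\cdots$ noted in Example~\ref{example:first_one}; the firings realizing these equivalences are supported on slashed ladder vertices and leave the $w_i$'s fixed, so every slashed ladder pair is covered by a divisor equivalent to $D$. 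For the one remaining case $q=w_{n-2}$, I would slide the $2(u_1)+(v_1)$ portion all the way onto the shared pair $(u_m,v_m)$, concentrating all $n$ chips inside $K_n$, and then perform a short sequence of firings that redistributes a chip onto $w_{n-2}$.

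For the lower bound I would argue by contradiction: assume $D$ is effective with $\deg(D)\le n-1$ and $r(D)\ge 1$, and pick a non-shared $w_0\in V(K_n)$ (possible since $n\ge 3$). Let $D'=D_{w_0}$, so $D'(w_0)\ge 1$ and some $v\in K_n-\{w_0\}$ has $D'(v)=0$. Running Dhar's algorithm from $v$, each vertex of any unburned set $U_K\subseteq K_n$ has $n-|U_K|$ burning $K_n$-neighbors, so
\[|U_K|\bigl(n-|U_K|\bigr)\;\le\;\sum_{u\in U_K}D'(u)\;\le\;n-1.\]
As in Lemma~\ref{lemma:complete_graph_burning}, concavity of $x(n-x)$ forces $|U_K|\in\{0,1,n-1\}$. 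If $|U_K|=0$ then $w_0$ burns and Lemma~\ref{lemma:dhars_q_v} gives $r(D')=0$, a contradiction. Otherwise, chasing the chip totals shows $D$ is equivalent either to $(n-1)(w_0)$ or to $\sum_{u\in K_n-\{v\}}(u)$, with no chips on the slashed ladder in either case.

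To rule out these two leftover divisors, I would rerun Dhar's from $u_1$: the chip-free slashed ladder burns outward from $u_1$ via the cascade highlighted in Example~\ref{example:first_one} and eventually lights $u_m$ and $v_m$, after which each non-shared $K_n$-vertex acquires two burning edges against at most one chip and so burns. The resulting divisor is $u_1$-reduced with no chip on $u_1$, giving rank $0$ and contradicting $r(D)\ge 1$. The main obstacle will be the upper-bound endgame (placing a chip on $w_{n-2}$ once all $n$ chips sit inside $K_n$): the firings needed interact with the slashed-ladder edges at $u_m,v_m$, whose valences depend on the precise direction of the slash between pairs $m-1$ and $m$, so some casework on valences will be unavoidable.
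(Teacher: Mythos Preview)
Your upper-bound plan is sound. Placing the three ladder chips at the far end (column $1$) rather than at the shared column, as the paper does, just means you must slide before handling $w_{n-2}$; but once the three chips sit on $\{u_m,v_m\}$, firing $V(G)\setminus\{w_{n-2}\}$ moves one chip off each of $u_m,v_m,w_1,\dots,w_{n-3}$ and $n-1$ chips onto $w_{n-2}$, leaving an effective divisor regardless of which of $u_m,v_m$ carried the $2$. The valence casework you anticipate does not actually arise.

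The lower bound, however, has a genuine gap. In the case $|U_K|=1$ you correctly deduce $D'=(n-1)(w_0)$, but when you rerun Dhar's from $u_1$ on this divisor, the assertion that ``each non-shared $K_n$-vertex acquires two burning edges against at most one chip and so burns'' is false for $w_0$ itself, which carries $n-1$ chips. Once the ladder and $u_m,v_m$ have burned, $w_0$ sees exactly $n-1$ burning edges (one from each of its $n-1$ neighbors, all in $K_n$) against $n-1$ chips, so it does \emph{not} burn; a single iteration leaves the unburned set $\{w_0\}$, and the divisor is not yet $u_1$-reduced. The repair is easy---fire $\{w_0\}$ to obtain $\sum_{u\in K_n-\{w_0\}}(u)$, which is your other leftover configuration, and then your $u_1$-burn does succeed---but as written the step fails. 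The paper sidesteps this by reducing at the higher-valence \emph{shared} vertex $q$, which has valence $n+1>n-1$; in the ``all chips on $q$'' case, $q$ therefore burns as soon as the (connected) remainder $G-q$ does, so no second pass is needed.
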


\begin{proof}
First we present a positive rank divisor of degree $n$.  This divisor places $3$ chips on the first column of the slashed ladder graph, with $2$ chips on the vertex incident to the diagonal edge and $1$ chip on the other vertex, and adds $1$ chip to all but one of the remaining $n-2$ vertices of the complete portion of the graph.  This divisor is shown in Figure \ref{fig:comp-slashlad-div} for $KL_{6,5}$.

\begin{figure}[hbtp]
    \centering 
    \includegraphics[scale = 0.8]{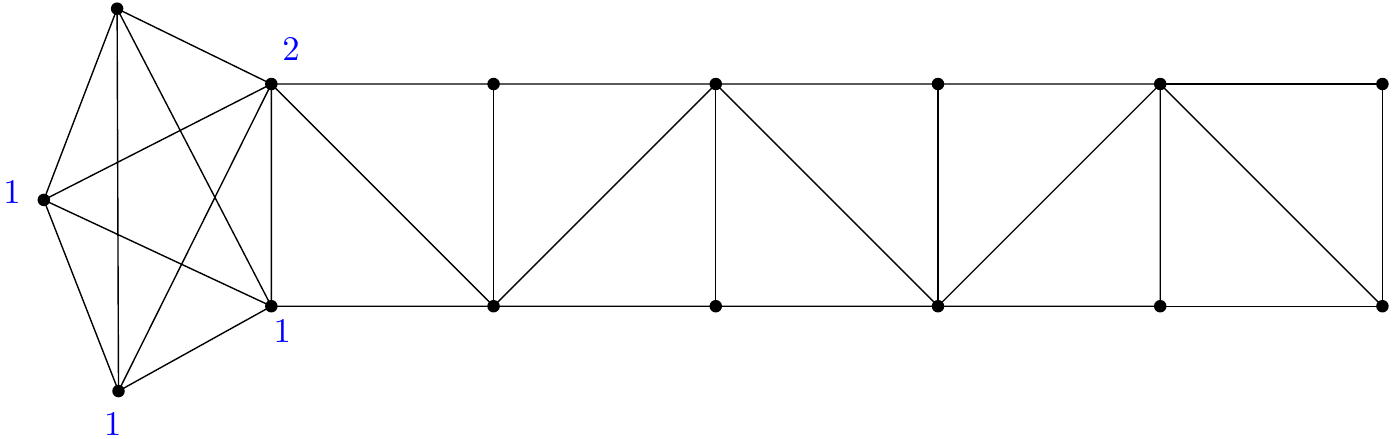}
    \caption{Divisor of degree $5$ for $KL_{6,5}$}
    \label{fig:comp-slashlad-div}
\end{figure}

This divisor has positive rank, because if debt is introduced on the slashed ladder portion of the graph, we can use the slashed ladder graph strategy to eliminate it; namely, chip-fire the entire complete graph portion of $KL_{m,n}$ to move the $3$ chips over to the second column of the slashed ladder graph, then chip-fire larger and larger subsets until the chips have eliminated the debt on the slashed ladder graph.  If debt is instead introduced on the last remaining vertex of the complete portion of the graph, chip-firing every single other vertex will eliminate debt from the graph.  So, \(\gon(KL_{m,n})\leq n\).

Suppose for the sake of contradiction that there is a positive rank effective divisor $D$ of degree $n-1$.  Of the two vertices in the overlap of the complete graph and the slashed ladder, choose \(q\) to be the one with higher valence. Without loss of generality we may assume that \(D\) is $q$-reduced,  so there is at least $1$ chip on $q$.  To reach a contradiction at this point, it suffices by Lemma \ref{lemma:dhars_q_v} to find a vertex \(v\in V(G)\) with \(D(v)=0\) such that running Dhar's burning algorithm on \(D-(v)\) burns the vertex \(q\).

Since there are only $n-1$ chips, there must be at least one vertex $v$ in the $K_n$ portion of the graph with no chips on it.  Run Dhar's burning algorithm on \(D-(v)\).  We know by Lemma \ref{lemma:complete_graph_burning} that for the $K_n$ subgraph (including \(q\)) not to burn with at most $n-1$ chips on it, it must have all $n-1$ chips, and either they must all be at $q$, or $1$ chip must be on each vertex of the $K_n$ except for $v$.  Based on this information regarding the structure of \(D\), we may now change our choice of \(v\). 

If $D$ has all $n-1$ chips on $q$, as in Figure \ref{fig:comp-slashlad-n-1-fail-1}, then any choice of \(v\) immediately burns the connected subgraph \(G-q\), and then $q$ as well since it is incident to more than \(n-1\) vertices.

\begin{figure}[hbtp]
    \centering 
    \includegraphics[scale = 0.8]{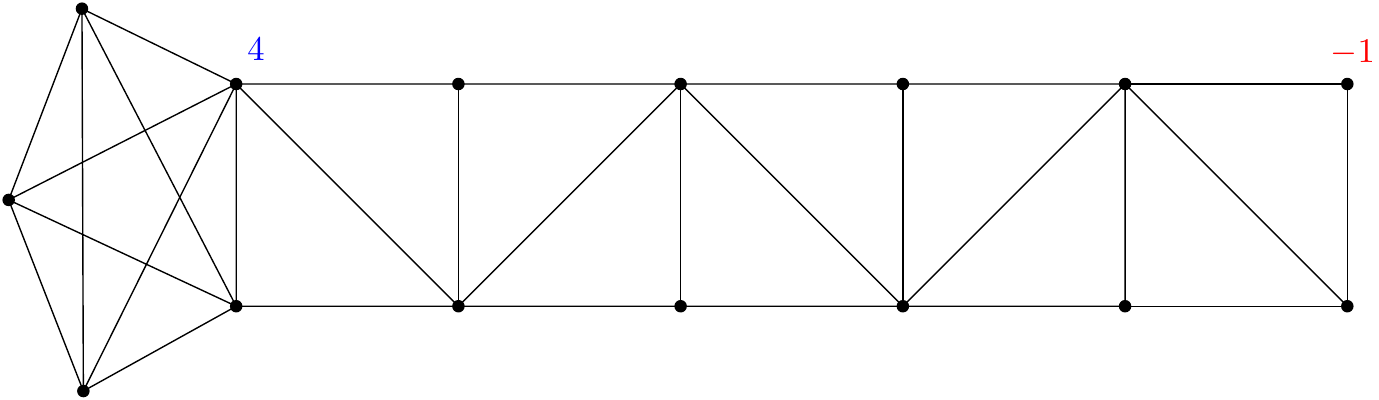}
    \caption{Degree $4$ divisor on $KL_{6,5}$ where all $4$ chips are on $q$, with $-1$ chips on a different vertex.}
    \label{fig:comp-slashlad-n-1-fail-1}
\end{figure}

If, instead, $D$ has $1$ chip on every vertex of $K_n$ except for one, as in Figure \ref{fig:comp-slashlad-n-1-fail-2}, then choose \(v\) to be in the complete slashed ladder portion of the graph.  Running Dhar's from \(v\) burns the whole slashed ladder away from \(K_n\), and then \(q\) burns as well since it has one chip and is incident to two burning edges, again giving us a contradiction.

\begin{figure}[hbtp]
    \centering 
    \includegraphics[scale = 0.8]{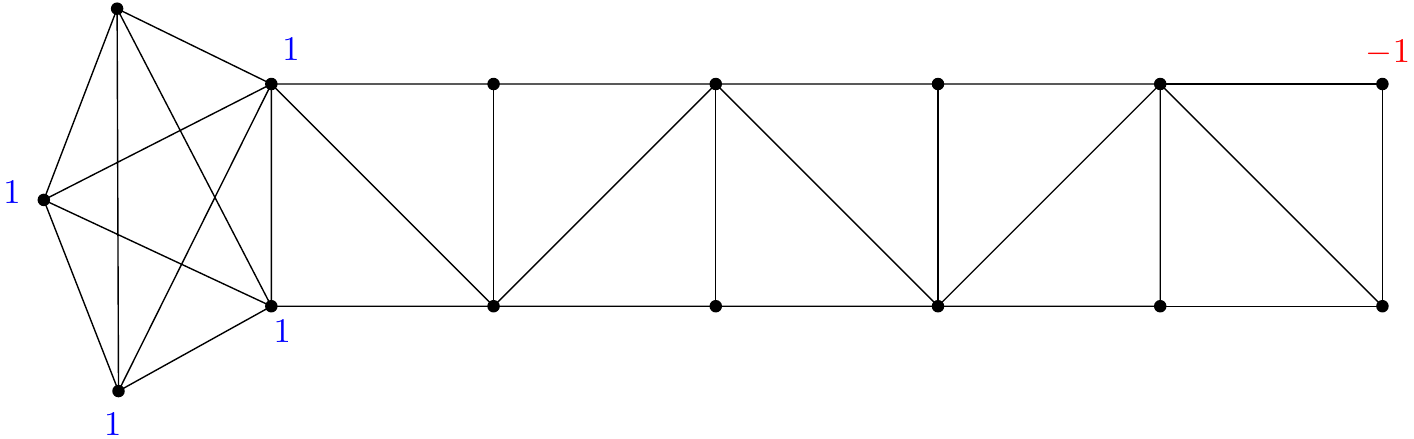}
    \caption{Degree $4$ divisor on $KL_{6,5}$ where each vertex has $1$ chip, with $-1$ chips on a vertex of the slashed ladder.}
    \label{fig:comp-slashlad-n-1-fail-2}
\end{figure}

Thus, it is impossible for a degree $n-1$ divisor to have positive rank. We conclude that the complete slashed ladder graph $KL_{m,n}$ has gonality $n$.

\end{proof}

\begin{lemma}
The complete slashed ladder graph $KL_{m,n}$ has multiplicity-free gonality $n + m - 2$.
\end{lemma}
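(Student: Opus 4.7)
For the upper bound, the plan is to show that $\alpha(KL_{m,n}) = m$ and then apply Lemma \ref{lemma:simple_independence} to obtain $\mfgon(KL_{m,n}) \leq |V(KL_{m,n})| - \alpha(KL_{m,n}) = (2m+n-2) - m = n+m-2$. To see $\alpha(KL_{m,n}) \geq m$, I exhibit an independent set of size $m$ in the $2 \times m$ slashed ladder (the same set whose complement is a multiplicity-free degree-$m$ positive-rank divisor underlying the $\mfgon = m$ computation discussed at the end of Example \ref{example:first_one}); since the extra edges introduced when attaching $K_n$ lie only among the $K_n$-vertices, and any such independent set in the slashed ladder contains at most one of the overlap vertices $u_m, v_m$ (they are joined by a rung), it remains independent in $KL_{m,n}$. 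To see the reverse inequality $\alpha(KL_{m,n}) \leq m$, note that any independent set uses at most one vertex of the clique $K_n$ together with at most $m-1$ further vertices from the $2(m-1)$-vertex slashed ladder exclusive portion.

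For the lower bound, suppose for contradiction that $D$ is a multiplicity-free divisor with $\deg(D) \leq n+m-3$. Then at least $(2m+n-2)-(n+m-3) = m+1$ vertices are chipless. The plan is to find a chipless vertex $w$ such that Dhar's burning algorithm on $D$ starting from $w$ ignites the entire graph; since $D$ will then be $w$-reduced with $D(w)=0$, the argument in the proof of Lemma \ref{lemma:dhars_q_v} will force $r(D)=0$, contradicting positive rank. I split into two cases. If some column $\{u_i, v_i\}$ with $1 \leq i \leq m-1$ is entirely chipless, I take $w=u_i$ and propagate the fire column by column through the slashed ladder using the pattern in Example \ref{example:first_one}; once $u_m$ and $v_m$ burn, every non-overlap $K_n$ vertex sees two burning edges against at most one chip and hence burns. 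Otherwise every one of the $m-1$ columns of the slashed ladder exclusive portion contains at least one chip, so at most $(n+m-3)-(m-1)=n-2$ chips lie in $K_n$, giving at least two chipless $K_n$ vertices; I take $w$ to be a chipless $K_n$ vertex, apply the Lemma \ref{lemma:complete_graph_burning} analysis inside $K_n$ (noting that $u(n-u) \geq n-1 > n-2$ rules out any unburned subset) to ignite all of $K_n$ in one pass, and then propagate the fire leftward column by column through the slashed ladder.

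The main obstacle will be carefully justifying the column-by-column fire spread in both lower-bound cases, and especially verifying that the diagonal edges of the slashed ladder consistently deliver at least two burning edges to each previously unburned column so that its vertices (each carrying at most one chip under multiplicity-freeness) all ignite; the upper bound is comparatively routine once the independence number computation is in hand.
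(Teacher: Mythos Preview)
Your proposal is correct and follows essentially the same route as the paper. For the upper bound, both you and the paper exhibit the complement of a maximum independent set (you compute $\alpha(KL_{m,n})=m$ and invoke Lemma~\ref{lemma:simple_independence}; the paper writes down the divisor directly and cites the same underlying fact). For the lower bound, both arguments use Dhar's burning algorithm with the same two ingredients: a chipless ladder column ignites the whole graph, and fewer than $n-1$ chips on the $K_n$ portion ignites the whole graph. Your case split on whether columns $1,\dots,m-1$ each carry a chip is a slight reorganization of the paper's count of ``at least one chip per column plus $n-1$ chips on $K_n$ with one overlap,'' but the burning analyses are identical.
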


\begin{proof}
One multiplicity-free divisor of degree $n + m - 2$ that has positive rank is the one that places a chip on each of the $m$ vertices along the diagonal of the slashed ladder portion of the graph, plus a chip on each of the $n-2$ vertices of the $K_n$ portion of the graph that aren't part of the slashed ladder.  Indeed, this is a placement of chips on the complement of an independent set, which always has positive rank for a simple graph  \cite[Theorem 3.1]{random_graphs}.  This divisor can be seen on $KL_{6,5}$ in Figure \ref{fig:comp-slashlad-mfdiv}.

\begin{figure}[hbtp]
    \centering 
    \includegraphics[scale = 0.8]{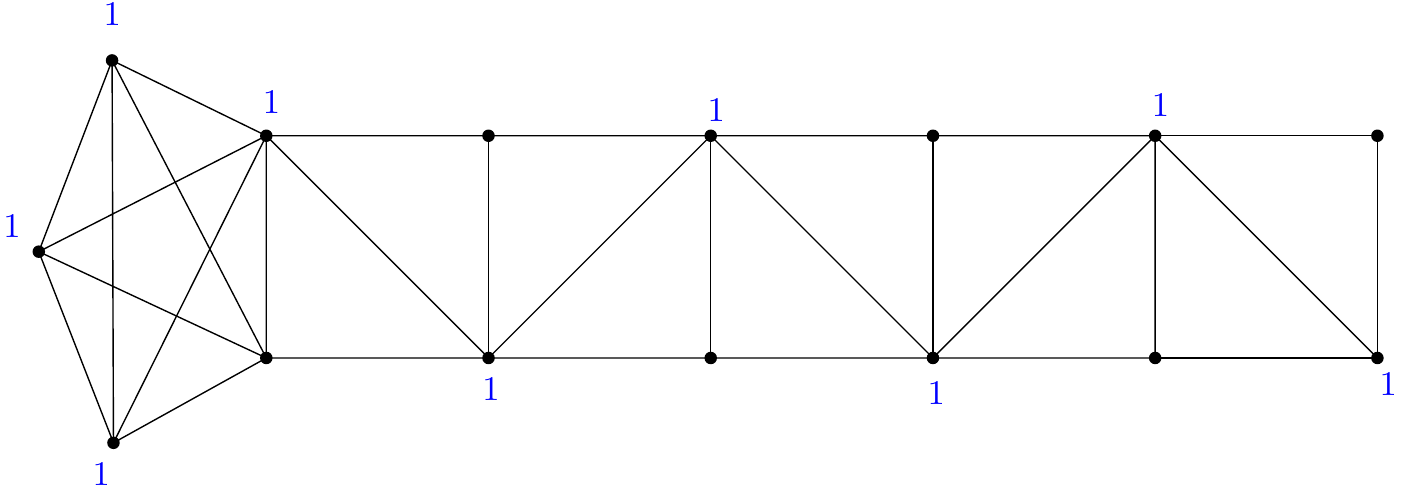}
    \caption{Degree $9$ multiplicity-free divisor on $KL_{6,5}$.}
    \label{fig:comp-slashlad-mfdiv}
\end{figure}

Now let \(D\) be an arbitrary multiplicity-free divisor. We claim that if a column of the slashed ladder graph portion has no chips on it from \(D\), then there exists a choice of vertex $v$ where $D - (v)$ burns. This can be seen as follows: if a column has no chips, we can start a fire on one of the vertices in that column.  Then, the other vertex in the column will burn, as well as all incident edges to each vertex in the column.  Because of the structure of the slashed ladder graph, there will be a vertex in each adjacent column that now has two burning edges incident to it, and at most $1$ chip on it, so it will burn, and then the other vertex in its column will burn as well for the same reason.  This process continues until the entire slashed ladder is burning.  Once the entire slashed ladder burns, the complete graph will already have $2$ burning vertices, and thus no vertex with at most $1$ chip on it will avoid burning.  So, for \(D\) to have positive rank, it must place at least \(m\) chips on the complete slashed ladder portion of the graph (at least one on each column).

Similarly, if the $K_n$ portion of the complete slashed ladder graph burns, then the entire graph will burn, because the $K_n$ portion contains a complete column of the slashed ladder graph within it.  Thus, if $K_n$ burns, the slashed ladder will burn as well, and therefore the whole graph will burn.  We know by Lemma \ref{lemma:complete_graph_burning} that there must be $n-1$ chips on $K_n$ to prevent it from burning, and the chips must be placed on all but $1$ vertex of $K_n$.  Thus there is a chip on at least \(n-1\) of the vertices of the complete graph, and at least one chip on each of the \(m\) columns of the slashed ladder graph. This means there are at least \(m+n-2\) chips in total, where the savings of \(1\) comes from the ability to have one of the columns of the slashed ladder taken care of by the complete graph's chips.  We conclude taht the multiplicity-free gonality of the complete slashed ladder graph is at least, and therefore exactly, $m + n - 2$.
\end{proof}

We are now ready to prove Theorem \ref{gon=mfgon}.

\begin{proof}[Proof of Theorem \ref{gon=mfgon}]
Let $n = i$ and $m = j - n + 2$.  Then, the complete slashed ladder graph $KL_{m,n}$ has gonality $i$ and multiplicity-free gonality $j$, for any desired values of $i$ and $j$.
\end{proof}

\section{Families of graphs} 
\label{section:families}
We close our paper by studying the mutiplicity-free gonality of graphs for certain graph families, or graphs with a particular structure.  In some cases we compute multiplicity-free gonalities where gonalities are unknown, and in other cases we can compare the two types of gonality.

\subsection{\(\ell\)-dimensional rook's graphs}

The \emph{Cartesian product} \(G\square H\) of two simple graphs \(G\) and \(H\) is the graph \(G\square H\) whose vertex set is \(V(G)\times V(H)\), where \((u_1,v_1)\) is adjacent to \((u_2,v_2)\) if and only if either \(u_1=u_2\) and \(v_1\) is adjacent to \(v_2\) in \(H\),  or \(v_1=v_2\) and \(u_1\) is adjacent to \(u_2\) in \(G\). An \(\ell\)-dimensional rook's graph is a Cartesian product of \(\ell\) complete graphs.  There are very few cases in which the gonality of rook's graph are known.  For \(n=2\), it was shown that \(\gon(K_m\square K_n)=\min\{m(n-1),n(m-1)\}\) for \(\min\{m,n\}\leq 5\) in \cite{cartesian_products}; a proof that this formula holds for all \(m\) and \(n\) will appear in a forthcoming paper by Noah Speeter.  It was also remarked that \(\gon(K_2\square K_3\square K_4)=12\) following the proof of \cite[Corollary 5.14]{small_2020_scramble}; beyond this, very little is known.  Restricting to multiplicity-free divisors, however, we can deliver a complete answer.

\begin{proposition}  Let $n_1\leq\cdots\leq n_\ell$, and consider $G=K_{n_1}\cart \cdots \cart K_{n_\ell}$. We have \[\mfgon(G)=(n_1-1)n_2\cdots n_\ell.\]
\end{proposition}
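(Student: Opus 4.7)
My plan is to prove matching upper and lower bounds on $\mfgon(G)$.

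For the upper bound, I would fix $u_1\in V(K_{n_1})$ and consider the multiplicity-free divisor $D$ placing one chip on every vertex $(v_1,\ldots,v_\ell)$ with $v_1\neq u_1$, so that $\deg(D)=(n_1-1)n_2\cdots n_\ell$. To check $r(D)\geq 1$, I would fire the support of $D$: each vertex in the support has exactly one edge into the ``first-coordinate-$u_1$'' layer and loses a single chip, while each vertex of that layer gains $n_1-1$ chips, one from each other value of the first coordinate. The resulting equivalent divisor places $n_1-1\geq 1$ chips on every vertex in the $u_1$-layer, so $D-(v)$ is equivalent to an effective divisor for every $v\in V(G)$.

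For the lower bound, I would show that any multiplicity-free divisor $D$ with $\deg(D)<(n_1-1)n_2\cdots n_\ell$ has rank zero. The chipless set $Z=\{v:D(v)=0\}$ then satisfies $|Z|>n_2\cdots n_\ell$, and partitioning $V(G)$ into the $n_2\cdots n_\ell$ slices in direction $1$ (each a $K_{n_1}$-subgraph on $n_1$ vertices), pigeonhole yields a slice $S_0$ with at least two chipless vertices. My aim would be to identify a chipless vertex $v$ such that Dhar's burning algorithm applied to $D$ starting from $v$ burns all of $V(G)$; then $D$ is $v$-reduced with $D(v)=0$, so $D-(v)$ is $v$-reduced and not effective, forcing $r(D)=0$ just as in the proof of Lemma~\ref{lemma:dhars_q_v}.

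Starting Dhar's at any chipless vertex of $S_0$ immediately burns $S_0$ entirely: a second chipless vertex in $S_0$ ignites from one burning edge, after which every chipped vertex in $S_0$ has at least two burning edges (versus its at most one chip) and burns. The main obstacle is propagating the fire beyond $S_0$ through the layers of vertices at increasing distance (in coordinates $2,\ldots,\ell$) from $S_0$. If Dhar's stalled with a nonempty unburned set $U\subseteq V(G)\setminus S_0$, then every $u\in U$ would have at most $D(u)\leq 1$ neighbor in $U^c$; in the rook's graph this forces $u$'s $K_{n_i}$-slice in each of all but at most one direction to lie entirely in $U$, giving $U$ a rigid product-like shape. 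I would complete the argument by showing such a $U$ is incompatible with $|Z|>n_2\cdots n_\ell$, possibly by choosing the starting vertex $v$ instead in a deeper chipless layer (chosen according to $Z$'s distribution) so that the fire cascades inward to $S_0$. Executing this structural analysis uniformly across all $\ell$ dimensions is, in my estimation, the technical heart of the proof.
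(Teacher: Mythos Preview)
Your upper bound is correct and is exactly the divisor and firing move used in the paper.

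Your lower bound, however, has a genuine gap: you correctly burn one $K_{n_1}$-slice $S_0$, but you do not carry out the propagation step, and your structural observation about the unburned set $U$ (that each $u\in U$ has all but at most one of its coordinate-cliques inside $U$) does not by itself yield a contradiction. The difficulty is real. The set $S_0$ has only $n_1$ vertices, and a vertex of $G$ that differs from $S_0$ in two or more of the coordinates $2,\ldots,\ell$ is not adjacent to anything in $S_0$; it sees no burning edge from the first wave. Getting the fire to such vertices requires tracking a multi-step cascade through several directions at once, and ``choose $v$ in a deeper chipless layer'' is not a usable criterion without further work.

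The paper sidesteps this entirely by slicing in the \emph{opposite} direction and inducting on $\ell$. View $G$ as $n_\ell$ copies $H_1,\ldots,H_{n_\ell}$ of $K_{n_1}\square\cdots\square K_{n_{\ell-1}}$. By pigeonhole some $H_i$ carries at most $(n_1-1)n_2\cdots n_{\ell-1}-1$ chips, so the inductive hypothesis produces a vertex $v\in H_i$ from which all of $H_i$ burns in one pass of Dhar's. The key point is that $H_i$ is large: \emph{every} vertex of $G$ is adjacent to its copy in $H_i$, so once $H_i$ burns every remaining vertex already has one burning edge. A second pigeonhole (here using $n_\ell\geq n_1$) locates another copy $H_j$ with a chipless vertex $w$; then $w$ burns, connectivity of $H_j$ burns all of $H_j$, and every remaining vertex now has two burning neighbors (one in $H_i$, one in $H_j$) and burns. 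Decomposing along the largest coordinate rather than the smallest is precisely what turns your open-ended ``structural analysis'' into a two-line propagation argument.
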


\begin{proof}
To see that $\mfgon(G)\leq(n_1-1)n_2\cdots n_\ell$, we consider the following multiplicity-free divisor of this degree:  we may view $G$ as $n_1$ copies of $K_{n_2}\cart \cdots \cart K_{n_\ell}$, connected according to $K_{n_1}$.  Choose one copy $H=K_{n_2}\cart \cdots \cart K_{n_\ell}$, and place a chip on every vertex in $V(G)\setminus V(H)$.  This divisor is multiplicity-free of degree $(n_1-1)n_2\cdots n_\ell$ and has positive rank, as we may fire all vertices outside of $H$ to eliminate debt wherever it is placed on $H$.  

We will now prove by induction on $\ell$ that if $D$ is a multiplicity-free divisor of degree $(n_1-1)n_2\cdots n_\ell-1$ on $ K_{n_1}\cart \cdots \cart K_{n_\ell}$, then there exists a vertex $v$ on the graph such that the entire graph burns on one iteration of Dhar's burning algorithm on $D-(v)$.  It will follow that no such multiplicity-free divisor has positive rank, giving us the desired lower bound on multiplicity-free gonality.

For the base case of $\ell=1$, our claim amounts to saying that if $K_n$ has an effective divisor $D$ of degree $(n-1)-1=n-2$, then there exists $v\in V(K_n)$ such that the whole graph burns on one iteration of Dhar's burning algorithm on $D-(v)$.  This is the content of \cite[Lemma 14]{cartesian_products}; the proof is very similar to that of Lemma \ref{lemma:complete_graph_burning}.

Now let $\ell\geq 2$, and assume our claim holds for $(\ell-1)$-fold products of complete graphs.  Let $D$ be an effective multiplicity-free divisor of degree $(n_1-1)n_2\cdots n_\ell-1$ on $G= K_{n_1}\cart \cdots \cart K_{n_\ell}$.  We can view $G$ as $n_\ell$ copies of $K_{n_1}\cart \cdots K_{n_{\ell-1}}$, with matching vertices connected according to $K_{n_\ell}$.  Refer to these copies as $H_1,\cdots,H_{n_\ell}$.  At least one $H_i$ has at most $(n_1-1)n_2\cdots n_{\ell-1}-1$ chips on it:  otherwise the degree of $D$ would be at least $(n_1-1)n_2\cdots n_\ell$.  By our inductive hypothesis, there exists a vertex $v\in V(H_i)$ such that all of $H_i$ burns under one iteration of Dhar's burning algorithm.  There exists some other index $j$ such that $H_j$ has at most $n_1n_2\cdots n_{\ell-1}-1$ chips: otherwise the degree of $D$ would be at least  $n_1n_2\cdots n_{\ell-1}(n_\ell-1)\geq (n_1-1)n_2\cdots n_{\ell-1}n_\ell$.  So at least one vertex in $H_j$, say $w$, does not have a chip.  Every vertex in $H_j$ is adjacent to a vertex in $H_i$, and so is incident to a burning edge.  This means the vertex $w$ burns.  From there every vertex in $H_j$ adjacent to $w$ will burn; then every vertex in $H_j$ adjacent to those vertices will burn; and so on.  Since $H_j$ is connected, every vertex in $H_j$ will burn.  Every other vertex $u$ in $G$ is incident to a vertex in $H_i$ and to a vertex in $H_j$, meaning that $u$ is incident to two burning edges, and $u$ will burn as well.  Thus the whole graph $G$ burns.

Since no multiplicity-free divisor of degree $(n_1-1)n_2\cdots n_\ell-1$ has positive rank, we have $\mfgon(G)\geq(n_1-1)n_2\cdots n_\ell $.  We conclude that  $\mfgon(G)=(n_1-1)n_2\cdots n_\ell $.
\end{proof}

\subsection{Wheel graphs}  The wheel graph \(W_n\) on \(n+1\) vertices consists of a cycle on \(n\) vertices together with a universal vertex.  To determine which wheel graphs have multiplicity-free gonality equal to gonality, we must first determine the gonality of \(W_n\).

\begin{theorem}\label{theorem:wheel_gon}
$\gon(W_n) = \lceil \sqrt{n}\rceil - 1 + \left\lceil \frac{n}{\lceil \sqrt{n}\rceil} \right \rceil$
\end{theorem}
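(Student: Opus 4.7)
Set $k=\lceil\sqrt{n}\rceil$ and $\ell=\lceil n/k\rceil$, so the claimed gonality is $k-1+\ell$. I plan to prove the formula via matching upper and lower bounds; let $h$ denote the hub of $W_n$ and $v_0,\ldots,v_{n-1}$ its cycle vertices.

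For the upper bound, I construct a positive-rank divisor $D$ of degree $k-1+\ell$ by placing $k-1$ chips on $h$ and one chip on each of $\ell$ cycle vertices $v_{a_0},\ldots,v_{a_{\ell-1}}$, chosen so that every maximal run of consecutive empty cycle vertices has length at most $k-1$. To show $r(D)\geq 1$, I produce for each vertex $u$ an equivalent effective divisor carrying a chip at $u$. If $u=h$ or $u\in\{v_{a_i}\}$, $D$ itself suffices; otherwise $u$ lies in some gap $T$ of empty cycle vertices with $|T|\leq k-1$, and firing $V(W_n)\setminus T$ (equivalently, anti-firing $T$) is legal because $D(h)=k-1\geq|T|$ and each chipped cycle vertex bordering $T$ has a chip to spare. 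The resulting effective divisor places at least one chip on every vertex of $T$, including $u$.

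For the lower bound, I show every effective $D$ with $\deg(D)\leq k+\ell-2$ has rank zero. Let $a=D(h)$ and $s=|\{v\in V(C_n):D(v)\geq 1\}|$. If $s=0$, then running Dhar's burning algorithm on $D-(v)$ from any cycle vertex $v$ sweeps the empty cycle and then burns the hub (since $a<n$), showing $D-(v)$ is $v$-reduced with coefficient $-1$ at $v$. If $s\geq 1$, pigeonhole gives a gap of $g\geq\lceil(n-s)/s\rceil=\lceil n/s\rceil-1$ consecutive empty cycle vertices. The algebraic identity
\[
\min_{t\geq 1}\bigl(\lceil n/t\rceil+t\bigr)=\lceil\sqrt{n}\rceil+\left\lceil\frac{n}{\lceil\sqrt{n}\rceil}\right\rceil=k+\ell,
\]
combined with $a+s\leq\deg(D)\leq k+\ell-2$, then yields $a\leq\lceil n/s\rceil-2<g$. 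Choose $v$ in the longest gap and run Dhar's on $D-(v)$ from $v$: the $g$ empty gap vertices burn, placing $g>a$ burning edges on the hub, which then burns, after which every empty cycle vertex burns.

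If every chipped cycle vertex has multiplicity one, each has at least two burning edges and burns, so the whole graph burns and $r(D)=0$. The main obstacle is the case where some chipped cycle vertex has multiplicity $\geq 2$ and can block fire propagation. My plan is to iterate the $v$-reduction: the unburnt set identified by Dhar's is a legal firing set whose execution sends chips from each high-multiplicity vertex to the hub (one per outgoing burning edge). Since the hub strictly accumulates chips across iterations while the total degree is preserved, after finitely many steps either the whole graph burns directly or the divisor's cycle support becomes multiplicity-free, reducing to the already-handled case. Throughout this iteration the coefficient at $v$ remains $-1$: $v$'s immediate neighbors (both cycle neighbors, which lie inside the long empty gap, together with the hub) never accumulate enough chips to resist the growing number of burning edges incident on them from $v$, the hub, and the burning empty cycle, so they are burnt in every subsequent Dhar's run and no firing move ever deposits a chip on $v$. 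The terminal $v$-reduced form therefore has $-1$ at $v$, so $D-(v)\not\sim$ any effective divisor and $r(D)=0$, contradicting $r(D)\geq 1$.
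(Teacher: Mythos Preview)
Your upper bound is correct and essentially identical to the paper's construction.

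Your lower bound, however, has a genuine gap in the case where some cycle vertex carries multiplicity at least $2$.  The first Dhar's iteration is fine: the long gap of $g>a$ empty vertices burns, forcing the hub to burn.  The problem is everything after that.  You assert that in every subsequent Dhar's run from $v$, the hub and both cycle-neighbors of $v$ burn, so that $v$ never receives a chip.  But after you fire the unburnt set $U_1$, the hub now holds $a+|U_1|$ chips, and the cycle has been rearranged: boundary vertices of $U_1$ have lost chips while their burnt neighbors have gained chips.  To burn the hub in the next round you need strictly more than $a+|U_1|$ cycle vertices to burn first, and you give no argument that the empty region reachable from $v$ has grown by at least $|U_1|$.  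Likewise, chips can propagate toward $v$ one step per iteration along the cycle (a fired vertex deposits a chip on its burnt neighbor, which may then survive the next round), so the claim that ``$v$'s cycle neighbors lie inside the long empty gap'' need not persist.  Finally, even if the cycle support eventually becomes multiplicity-free, this is for a \emph{new} divisor with a much larger hub value, and your ``already-handled case'' established burning only for the original divisor.

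The paper sidesteps all of this with one move: it replaces $D$ by its $h$-reduced representative before doing anything else.  Then $D(h)\geq 1$ (since $r(D)\geq 1$), and the same gap-finding argument you use shows that Dhar's from a gap vertex burns the hub.  At that point you are done immediately: if any cycle vertices remained unburnt, they would form a legal firing set avoiding $h$, contradicting $h$-reducedness.  No iteration, no tracking of multiplicities.  Passing to the hub-reduced form is exactly the missing idea.
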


\begin{proof}
Let $D$ be an effective divisor achieving gonality on $W_n$, and suppose for the sake of contradiction that $\deg(D)\leq\lceil \sqrt{n}\rceil - 2 + \left\lceil \frac{n}{\lceil \sqrt{n}\rceil} \right\rceil$.  Let $w$ be the universal vertex.  Without loss of generality, we will assume that $D$ is $w$-reduced.

Let $k=D(w)$; we know that $k\geq 1$, since $D$ is $w$-reduced and \(r(D)=1\).  We claim that there exist $k+1$ incident radial vertices $v_1,\ldots,v_{k+1}$ such that $D(v_i)=0$ for all $i$.  Suppose not.  This means that there are at least $\lceil\frac{n}{k+1}\rceil$ chips placed on the radial vertices; otherwise the prescribed gap would exist.  It follows that $k\leq \deg(D)-\left\lceil\frac{n}{k+1}\right\rceil\leq\lceil \sqrt{n}\rceil - 2 + \left\lceil \frac{n}{\lceil \sqrt{n}\rceil} \right\rceil-\left\lceil\frac{n}{k+1}\right\rceil$.  This can be rewritten as
\[(k+1)+\left\lceil\frac{n}{k+1}\right\rceil+1\leq \lceil \sqrt{n}\rceil+ \left\lceil \frac{n}{\lceil \sqrt{n}\rceil} \right\rceil\]
Note that $(k+1)+\lceil\frac{n}{k+1}\rceil+1=\left\lceil(k+1)+\frac{n}{k+1}+1\right\rceil$.  Consider the function $x+\frac{n}{x}+1$.  This function is concave up for all positive $x$, and is minimized at $x=\sqrt{n}$.  If $k$ can be any positive integer, it follows that $(k+1)+\frac{n}{k+1}+1$ is at its minimimum when $k+1$ is either $\floor{\sqrt{n}}$ or $\ceil{\sqrt{n}}$; the same is true for $\left\lceil(k+1)+\frac{n}{k+1}+1\right\rceil$. In fact, by Proposition \ref{rootfloorproof}, these two choices for $k+1$ give the same output.  It follows that
\[\lceil \sqrt{n}\rceil+ \left\lceil \frac{n}{\lceil \sqrt{n}\rceil} \right\rceil+1\leq(k+1)+\left\lceil\frac{n}{k+1}\right\rceil+1\leq \lceil \sqrt{n}\rceil+ \left\lceil \frac{n}{\lceil \sqrt{n}\rceil} \right\rceil,\]
a contradiction.  This lets us conclude that there exist $v_1,\ldots,v_{k+1}$ as claimed.

Run Dhar's burning algorithm on $D-(v_1)$.  The vertices $v_1,\ldots,v_{k+1}$ all burn, since they are connected and $D(v_i)=0$ for all $i$.  The vertex $w$ is now incident to $k+1$ burning edges, and so will burn as well since $D(w)=k$.  If the graph does not burn, then some subset of $V(W_n)$ not including $w$ can fire, contradicting the fact that $D$ is $w$-reduced.  Thus the whole graph will burn and $r(D)<1$, a contradiction.  We conclude that the gonality of $W_n$ is at least   $\lceil \sqrt{n}\rceil - 1 + \left\lceil \frac{n}{\lceil \sqrt{n}\rceil} \right\rceil$.

To see that the gonality of $W_n$ is at most $\lceil \sqrt{n}\rceil - 1 + \left\lceil \frac{n}{\lceil \sqrt{n}\rceil} \right\rceil$, place $\lceil \sqrt{n}\rceil - 1$ chips on $w$, and place $\left\lceil \frac{n}{\lceil \sqrt{n}\rceil} \right\rceil$ chips around the radial vertices so that no two chipped vertices in a row are more than $\lceil \sqrt{n}\rceil$ apart on the outer cycle. This is illustrated for \(n=12\) on the wheel with \(13\) vertices in Figure \ref{figure:wheel_general}. The only vertices without chips are then radial vertices in clusters of length at most $\lceil \sqrt{n}\rceil -1$.  If $-1$ is placed on such a vertex, then firing all vertices not in its cluster eliminates all debt from the graph.  Thus, there exists a positive rank divisor of degree  $\lceil \sqrt{n}\rceil - 1 + \left\lceil \frac{n}{\lceil \sqrt{n}\rceil} \right\rceil$.  

\begin{figure}[hbt]
    \centering
    \includegraphics{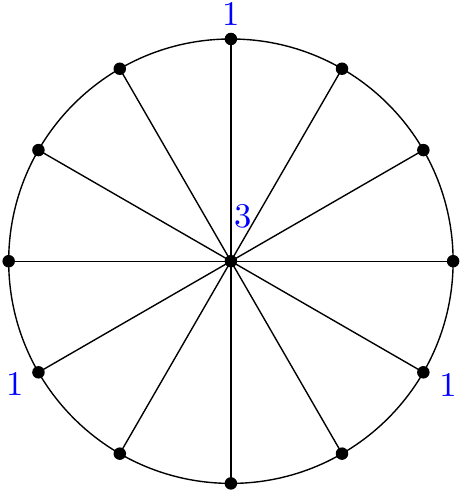}
    \caption{A winning placement of chips on the wheel with \(13\) vertices}
    \label{figure:wheel_general}
\end{figure}
\end{proof}

Now we determine the multiplicity-free gonality of \(W_n\).  We will use the following lemma.

\begin{lemma}\label{lemma:universal_vertex}
Let \(G\) be a simple graph with a universal vertex \(v\) such that \(G-v\) is connected. Then \(\textrm{mfgon}(G)=n-\alpha(G)\).
\end{lemma}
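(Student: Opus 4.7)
The upper bound $\mfgon(G) \le n - \alpha(G)$ already holds for any simple graph by Lemma \ref{lemma:simple_independence}, so only the matching lower bound $\mfgon(G) \ge n - \alpha(G)$ requires proof. My plan is: for an arbitrary multiplicity-free effective divisor $D$ with $\deg(D) \le n - \alpha(G) - 1$, exhibit a vertex $w$ with $D(w)=0$ such that one iteration of Dhar's burning algorithm starting at $w$ burns the entire graph. Since $D$ is then $w$-reduced with $D(w)=0$, the divisor $D-(w)$ is $w$-reduced with a negative coefficient at $w$ and hence not equivalent to an effective divisor, forcing $r(D)=0$.

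Let $S = \{u \in V(G) : D(u)=0\}$. Because $\deg(D) \le n - \alpha(G) - 1$ and $D$ is multiplicity-free, $|S| \ge \alpha(G)+1$, so $S$ is not independent; in particular $|S|\ge 2$ and $S$ contains a pair of adjacent vertices. The key observation, which I will use in both cases, is that once the universal vertex $v$ is burning, every vertex $u \in V(G-v)$ with $D(u) = 0$ has a burning edge (to $v$) and so ignites, while every vertex with $D(u) = 1$ ignites as soon as it has one burning neighbor in $G - v$ (the edge to $v$ provides the required second burning edge). Since $G - v$ is connected, a nonempty burning subset of $G - v$ therefore propagates fire to all of $G - v$.

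I split into two cases based on whether $v \in S$. In the first case $v \in S$, I begin Dhar's algorithm at $v$. All edges from $v$ burn, igniting every vertex of $S \setminus \{v\}$ immediately; this set is nonempty because $|S|\ge 2$, so by the key observation the fire spreads to all of $G - v$, and everything burns. In the second case $v \notin S$, so $D(v)=1$; I choose adjacent $u_1, u_2 \in S$ (provided by the non-independence of $S$) and start Dhar's at $u_1$. Then $u_2$ burns because $D(u_2)=0$ and it has a burning edge from $u_1$. The vertex $v$ now has two burning edges, one from each of $u_1$ and $u_2$, exceeding $D(v)=1$, so $v$ burns as well. From this point the argument of the first case applies verbatim: the rest of $S$ ignites and connectedness of $G - v$ finishes the job.

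The main obstacle is handling the case $v \notin S$, since starting the fire at an arbitrary vertex of $S$ might stall before reaching $v$; this is precisely why one must exploit the two guaranteed adjacent vertices in $S$ to force $v$ to burn. Apart from that step, the proof is a clean application of Dhar's algorithm together with the connectedness of $G - v$.
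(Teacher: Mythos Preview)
Your proof is correct and follows essentially the same approach as the paper's: both arguments use that the set $S$ of chipless vertices has size exceeding $\alpha(G)$ and therefore contains an adjacent pair, then show that once the universal vertex $v$ together with at least one vertex of $G-v$ is burning, connectedness of $G-v$ forces the whole graph to burn. The only organizational difference is that you split into the cases $v\in S$ and $v\notin S$ (starting Dhar's at $v$ in the first case), whereas the paper always starts the fire at one of the two adjacent chipless vertices and then observes that $v$ is either one of them or burns immediately from its two burning edges.
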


\begin{proof}
We have \(\textrm{mfgon}(G)\leq n-\alpha(G)\) by Lemma \ref{lemma:simple_independence}.  Now let \(D\) be a multiplicity-free divisor of degree less than \(n-\alpha(G)\).  It follows that \(D\) that has two adjacent vertices \(u\) and \(w\) that do not have chips.  Place a \(-1\) on either of them, and run Dhar's burning algorithm, so that both \(u\) and \(w\) burn.  Either \(v\in \{u,w\}\), in which case both \(v\) and a vertex of \(G-v\) burn; or \(v\notin \{u,w\}\), in which case \(v\) has two burning edges and at most \(1\) chip and thus we still have both \(v\) and a vertex of \(G-v\) burn.  At this point every vertex in \(G-v\) has at least one burning edge coming from \(v\), meaning that one more burning edge is enough to make them burn; and since \(G-v\) is connected with at least one burning vertex, the fire spreads through the whole graph.  Thus \(r(D)=0\), and so there does not exist a multiplicity-free divisor of degree less than \(n-\alpha(G)\) of positive rank.  This gives us \(\mfgon(G)\geq n-\alpha(G)\), completing the proof.
\end{proof}

\begin{corollary}\label{corollary:wheel_mfgon}
\(\mfgon(W_n)=\lceil n/2\rceil+1\).
\end{corollary}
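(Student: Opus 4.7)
The plan is to invoke Lemma \ref{lemma:universal_vertex} directly, which reduces the problem to computing the independence number $\alpha(W_n)$. First I would verify the hypotheses of the lemma: by definition of the wheel graph, the central vertex $v$ is universal (adjacent to all $n$ radial vertices), and $W_n - v$ is just the cycle $C_n$ on the radial vertices, which is connected. Since $W_n$ has $n+1$ vertices, the lemma gives $\mfgon(W_n) = (n+1) - \alpha(W_n)$.

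Next I would compute $\alpha(W_n)$. Because $v$ is adjacent to every other vertex, any independent set $S$ with $|S| \geq 2$ must satisfy $v \notin S$, so $S$ is an independent set in the cycle $C_n$ on the radial vertices. The independence number of $C_n$ is well known to be $\lfloor n/2 \rfloor$, achieved by taking every other vertex. For $n \geq 2$ this is at least $1$, so $\alpha(W_n) = \lfloor n/2 \rfloor$.

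Finally I would assemble the formula:
\[
\mfgon(W_n) = (n+1) - \lfloor n/2 \rfloor = \lceil n/2 \rceil + 1,
\]
using the identity $n - \lfloor n/2 \rfloor = \lceil n/2 \rceil$.

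There is no serious obstacle here; the heavy lifting has already been done in Lemma \ref{lemma:universal_vertex}. The only thing to be slightly careful about is the trivial cases (small $n$ where the wheel may be degenerate), but as long as $n \geq 3$ the cycle $C_n$ is well-defined and connected, and the formula for $\alpha(C_n)$ applies without modification.
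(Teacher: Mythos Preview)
Your proposal is correct and follows exactly the same approach as the paper: invoke Lemma~\ref{lemma:universal_vertex}, note that \(\alpha(W_n)=\lfloor n/2\rfloor\), and apply the identity \((n+1)-\lfloor n/2\rfloor=\lceil n/2\rceil+1\). If anything, you are slightly more explicit in checking the hypotheses of the lemma and justifying the value of \(\alpha(W_n)\).
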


\begin{proof}
This follows from Lemma \ref{lemma:universal_vertex}, the fact that \(\alpha(W_n)=\lfloor n/2\rfloor\), and the identity \((n+1)-\lfloor n/2\rfloor=\lceil n/2\rceil+1\).
\end{proof}

\begin{theorem}
The wheel \(W_n\) has gonality equal to multiplicity-free gonality if and only if \(n\leq 8\).
\end{theorem}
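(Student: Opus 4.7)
My plan is to deduce the theorem directly from the closed-form expressions $\gon(W_n) = \lceil\sqrt{n}\,\rceil - 1 + \lceil n/\lceil\sqrt{n}\,\rceil\rceil$ from Theorem~\ref{theorem:wheel_gon} and $\mfgon(W_n) = \lceil n/2\rceil + 1$ from Corollary~\ref{corollary:wheel_mfgon}. Since $\gon(G)\leq\mfgon(G)$ unconditionally, establishing the theorem reduces to pinning down exactly the set of $n$ for which the numerical inequality
\[
\lceil\sqrt{n}\,\rceil + \Big\lceil\tfrac{n}{\lceil\sqrt{n}\,\rceil}\Big\rceil \;\geq\; \Big\lceil\tfrac{n}{2}\Big\rceil + 2
\]
holds, and this is what I would analyze.

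For the ``if'' direction, I would tabulate both sides for each $n\in\{3,\dots,8\}$. Writing $k=\lceil\sqrt{n}\,\rceil$, one has $k=2$ on $n\in\{3,4\}$ and $k=3$ on $n\in\{5,\dots,8\}$, and an elementary arithmetic check shows that $\gon(W_n)=\mfgon(W_n)$ in each case, with common values $3,3,4,4,5,5$ respectively. This handles all of the finitely many ``equality'' cases at once.

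For the ``only if'' direction I would show the strict inequality $\gon(W_n) < \mfgon(W_n)$ whenever $n\geq 9$. Setting $k=\lceil\sqrt{n}\,\rceil$ and using $n\leq k^2$, I would bound the left side by $\lceil\sqrt{n}\,\rceil + \lceil n/\lceil\sqrt{n}\,\rceil\rceil \leq 2k+1 = O(\sqrt{n})$, while the right side $\lceil n/2\rceil + 2$ is $\Theta(n)$. For $n$ beyond an explicit threshold these estimates already give strictness; the remaining small values are finite and would be handled by direct substitution, grouped by the value of $k$.

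The main obstacle is the delicate small-$n$ boundary regime where $\lceil\sqrt{n}\,\rceil\in\{3,4\}$: within each such band the quantity $k+\lceil n/k\rceil$ is almost constant while $\lceil n/2\rceil$ increases, so the strict inequality must be verified $n$-by-$n$ rather than by a single clean estimate. In particular, one has to be attentive to how the ceiling $\lceil n/\lceil\sqrt{n}\,\rceil\rceil$ jumps precisely when $n$ crosses a multiple of $\lceil\sqrt{n}\,\rceil$, and to check this jump against the corresponding parity-dependent behavior of $\lceil n/2\rceil$, so as to confirm that the transition from equality to strict inequality occurs exactly at $n=9$ and persists thereafter.
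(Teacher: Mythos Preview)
Your proposal is correct and takes essentially the same approach as the paper: both reduce the question to comparing the closed-form expressions from Theorem~\ref{theorem:wheel_gon} and Corollary~\ref{corollary:wheel_mfgon}, use a crude $O(\sqrt{n})$ upper bound on the gonality formula against the $\Theta(n)$ growth of $\lceil n/2\rceil+1$ to force strict inequality for all large $n$, and then finish off the remaining finitely many small $n$ by direct tabulation. The only cosmetic difference is that the paper first invokes Proposition~\ref{rootfloorproof} to rewrite the gonality formula with $\lfloor\sqrt{n}\rfloor$ in place of $\lceil\sqrt{n}\rceil$ before bounding, whereas you work with the ceiling version throughout.
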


We reserve the proof of this result for Proposition \ref{prop:wheel_equality} in Appendix \ref{section:wheel_lemmas}; it is simply a verification that these are the only values of \(n\) that make the formulas from Theorem \ref{theorem:wheel_gon} and Corollary \ref{corollary:wheel_mfgon} equal to one another.  The small wheel graphs that do have gonality achieved by a multiplicity-free divisor are pictured in Figure \ref{fig:mf_wheels}, along with such a divisor.

\begin{figure}[hbt]
    \centering
    \includegraphics{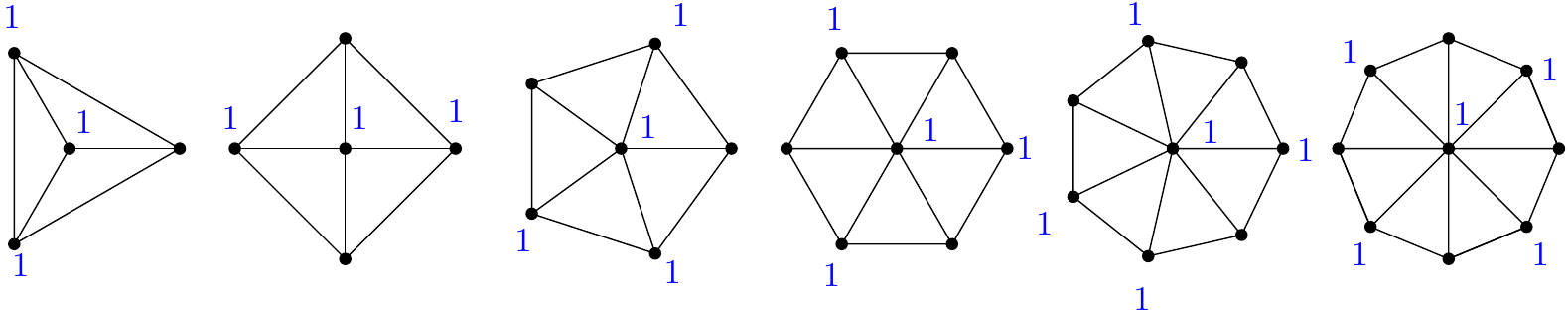}
    \caption{Wheel graphs with multiplicity-free divisors achieving gonality}
    \label{fig:mf_wheels}
\end{figure}

\subsection{Simple graphs with high minimum valence}

In \cite{small_2020_scramble}, it was shown that if a simple graph \(G\) on \(n\) vertices has minimum valence \(\delta(G)\geq \lfloor n/2\rfloor +1\), then
\[\gon(G)=n-\alpha(G).\]
Since \(\gon(G)\leq \mfgon(G)\leq n-\alpha(G)\), we immediately have the following result:

\begin{corollary}\label{corollary:gon_mfgon_n-alpha}
Let \(G\) be a simple graph on \(n\) vertices with minimum valence \(\delta(G)\geq \lfloor n/2\rfloor +1\).  Then \(\gon(G)=\mfgon(G)=n-\alpha(G)\).
\end{corollary}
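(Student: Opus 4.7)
The plan is to invoke a short sandwich argument: the corollary follows by squeezing \(\mfgon(G)\) between two quantities that the hypothesis already forces to be equal. Specifically, I would combine the general inequalities \(\gon(G)\leq \mfgon(G)\leq n-\alpha(G)\) with the cited result from \cite{small_2020_scramble} that under the valence hypothesis \(\delta(G)\geq \lfloor n/2\rfloor+1\), the gonality itself attains the upper bound \(n-\alpha(G)\).

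First I would note that \(\gon(G)\leq \mfgon(G)\) holds for every graph, simply because a multiplicity-free divisor of positive rank is in particular a positive-rank effective divisor, so the minimum degree over the smaller class cannot be less than the minimum over the larger class. Next I would invoke Lemma \ref{lemma:simple_independence}, which says that for any simple graph the divisor \(D\) with \(D(v)=0\) on a maximum independent set \(S\) and \(D(v)=1\) elsewhere is multiplicity-free of positive rank, giving \(\mfgon(G)\leq n-\alpha(G)\). Chaining these yields
\[\gon(G)\leq \mfgon(G)\leq n-\alpha(G).\]

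Finally I would apply the result from \cite{small_2020_scramble} that under the hypothesis \(\delta(G)\geq \lfloor n/2\rfloor+1\), one has \(\gon(G)=n-\alpha(G)\). This collapses the above chain to equalities throughout, yielding \(\gon(G)=\mfgon(G)=n-\alpha(G)\). There is essentially no obstacle in the argument itself; all the hard work is absorbed into the cited theorem providing the matching lower bound \(\gon(G)\geq n-\alpha(G)\), and the remaining bookkeeping is a one-line composition of inequalities already established in the paper.
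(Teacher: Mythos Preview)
Your proof is correct and matches the paper's own argument essentially verbatim: the paper states the chain \(\gon(G)\leq \mfgon(G)\leq n-\alpha(G)\) (the latter from Lemma~\ref{lemma:simple_independence}) and then collapses it using the cited result \(\gon(G)=n-\alpha(G)\) from \cite{small_2020_scramble}. There is nothing to add or correct.
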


The authors of \cite{small_2020_scramble} leveraged their result to prove that it is NP-hard to compute the gonality of a simple graph.  We can mirror their argument to do the same for multiplicity-free gonality.

\begin{theorem}\label{theorem:mfgon_nphard}
Computing \(\mfgon(G)\) is NP-hard.
\end{theorem}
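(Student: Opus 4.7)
The plan is to translate the NP-hardness argument for divisorial gonality from \cite{small_2020_scramble} into the multiplicity-free setting, using Corollary \ref{corollary:gon_mfgon_n-alpha} as the bridge. Since computing the independence number $\alpha$ of a simple graph is NP-hard, it suffices to exhibit a polynomial-time reduction from the independence number problem to multiplicity-free gonality.

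Given an arbitrary simple graph $G'$ on $n'$ vertices, I would build $G$ as the \emph{join} of $G'$ with a clique on $m = n'+2$ fresh vertices: set $V(G) = V(G') \cup V(K_m)$ and take $E(G)$ to be $E(G') \cup E(K_m)$ together with every edge $vw$ with $v \in V(G')$ and $w \in V(K_m)$. This $G$ is simple, has $n = 2n'+2$ vertices, and is produced in polynomial time. Every vertex of $V(G')$ has valence at least $m = n'+2$ in $G$, and every vertex of $V(K_m)$ has valence $(m-1)+n' = n-1$. Since $\lfloor n/2 \rfloor + 1 = n'+2$, we have $\delta(G) \geq \lfloor n/2 \rfloor + 1$, so the hypothesis of Corollary \ref{corollary:gon_mfgon_n-alpha} is satisfied.

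To finish, I would compute $\alpha(G)$: each vertex of $V(K_m)$ is adjacent to every other vertex of $G$, so any independent set of $G$ is either contained in $V(G')$ or is a single vertex of $V(K_m)$; assuming $G'$ is nonempty, this yields $\alpha(G) = \alpha(G')$. Corollary \ref{corollary:gon_mfgon_n-alpha} then gives $\mfgon(G) = n - \alpha(G) = (2n'+2) - \alpha(G')$, so $\alpha(G') = (2n'+2) - \mfgon(G)$. Hence any polynomial-time algorithm for $\mfgon$ yields one for the independence number, proving NP-hardness. The argument is essentially a mechanical translation of the gonality case; the only bookkeeping to verify is the valence lower bound for the join construction, which is immediate, so no genuine obstacle arises once Corollary \ref{corollary:gon_mfgon_n-alpha} is in hand.
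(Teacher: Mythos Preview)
Your proof is correct and follows essentially the same approach as the paper: both reduce from independence number via Corollary~\ref{corollary:gon_mfgon_n-alpha} by joining the input graph to a clique of comparable size so that the minimum-valence hypothesis is met. The only difference is cosmetic---the paper adds exactly \(|V(H)|\) universal vertices (and takes \(H\) connected to ensure \(\delta(G)\geq \lfloor n/2\rfloor+1\)), whereas you add \(|V(G')|+2\), which sidesteps the connectedness assumption.
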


\begin{proof}
Let \(H\) be any simple connected graph on \(m\) vertices, and let \(G\) be the \(m^{th}\) cone over \(H\).  That is, \(G\) is obtained from \(H\) by iteratively adding \(m\) universal vertices to it. Letting \(n=2m\), we have that \(G\) is a graph on \(n\) vertices with \(\delta(G)\geq n/2 +1\), implying by Corollary \ref{corollary:gon_mfgon_n-alpha} that \(\mfgon(G)=n-\alpha(G)\).  Noting that \(\alpha(H)=\alpha(G)\), we have \(\mfgon(G)=n-\alpha(H)\).
 This means we may compute the independence number of any simple graph \(H\) by computing the multiplicity free gonality of a graph \(G\) that is polynomial in the size of \(H\).  Since independence number is NP-hard to compute, we conclude that \(\mfgon(G)\) is NP-hard to compute as well.
\end{proof}

\subsection{Regular graphs}  In this subsection we consider the question:  for which values of \(r\) do there exist \(r\)-regular graphs with a gap between gonality and multiplicity-free gonality?  We remark that the only connected \(1\)-regular graph is \(K_2\), which has gonality and multiplicity-free gonality equal to \(1\); and the only connected \(2\)-regular graphs are the cycle graphs \(C_n\), which have gonality and multiplicity-free gonality both equal to \(2\).  Once we have \(r\geq 4\), however, we can find graphs exhibiting a gap between the two gonalities, both among simple graphs and multigraphs.

\begin{proposition}\label{proposition:regular}  For any $r\geq 4$, there exist simple and non-simple $r$-regular graphs \(G\) with \(\gon(G)<\mfgon(G)\).
\end{proposition}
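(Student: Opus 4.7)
The plan is to split into simple and non-simple cases and exhibit explicit constructions in each.

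In the simple case, $r \in \{4,5\}$ is already handled by Example~\ref{example:antiprism}. For $r \geq 6$ I would generalize the antiprism: on $2n$ vertices $u_1, \ldots, u_n, v_1, \ldots, v_n$, place the $u_i$'s and $v_i$'s in two $n$-cycles and add $r-2$ diagonal edges per $u_i$ to a consecutive symmetric window of $v$'s around $v_i$ (indices mod $n$), producing a simple $r$-regular graph $\mathfrak{A}_{r,n}$ for $n$ large. The key structural property needed is the burning propagation seen in Example~\ref{example:antiprism}: a burning pair $\{u_i,v_i\}$ forces $v_{i+1}$ to burn (via its edges to both $v_i$ and $u_i$), and this in turn forces $u_{i+1}$ to burn (via its edges to both $u_i$ and $v_{i+1}$). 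Pigeonhole and Lemma~\ref{lemma:dhars_q_v} then give $\mfgon(\mathfrak{A}_{r,n}) \geq n$. For the matching gonality upper bound I would exhibit, following Example~\ref{example:antiprism}, a concentrated positive-rank divisor supported on a short block of consecutive pairs (generalizing $3(u_1)+(u_2)+(u_3)+(v_1)+(v_2)+3(v_3)$) of degree less than $n$, verifying positive rank by producing a family of equivalent divisors obtained by firing arcs of the two cycles that collectively cover every vertex. Taking $n$ large then forces $\gon < \mfgon$.

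In the non-simple case for even $r \geq 4$, the multi-cycle $G = C_n^{(r/2)}$ satisfies the hypothesis of Lemma~\ref{lemma:all_multiedges}, so $\mfgon(G) = n$; the divisor $D = (r/2)(v_0) + (r/2)(v_1)$ of degree $r$ has positive rank, since iteratively firing the arcs $\{v_{-k+1}, \ldots, v_k\}$ is legal (each boundary vertex carries exactly $r/2$ chips, matching its outdegree) and produces equivalent divisors $(r/2)(v_{k+1}) + (r/2)(v_{n-k})$ which together cover every vertex. Hence $\gon(G) \leq r < n = \mfgon(G)$ for $n > r$. For odd $r \geq 5$ I would take $C_n^{((r-1)/2)}$ on an even number of vertices and add the chord perfect matching $v_i v_{i+n/2}$ of single edges; the doubled cycle edges still drive the Dhar propagation, so the argument of Lemma~\ref{lemma:all_multiedges} adapts to give $\mfgon = n$, while a slightly boosted version of the concentrated divisor keeps $\gon$ bounded in terms of $r$.

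The main obstacle will be verifying positive rank of the concentrated divisor on $\mathfrak{A}_{r,n}$: for $r = 4$ this was handled briefly in Example~\ref{example:antiprism}, but for general $r$ the chip-firing bookkeeping becomes more intricate as the diagonal connections thicken, and care is needed to ensure the swept-out family of equivalent divisors still covers every vertex of the graph.
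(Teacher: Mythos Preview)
Your non-simple constructions are close in spirit to the paper's. For even $r$ your multi-cycle is essentially the paper's choice (the paper takes $n=r+1$ and puts all $r$ chips on a single vertex, but your two-vertex divisor works equally well). For odd $r$ your idea of adding a single-edge perfect matching to a doubled cycle is workable---the Dhar argument does adapt since the cycle multi-edges alone force propagation, and one can build a positive-rank divisor of degree $2(r-1)$ supported on two antipodal adjacent pairs, the chords then staying internal when one fires growing antipodal arcs---but you have not supplied these details, and some parity care on $n$ is needed. The paper instead uses a cycle with edge multiplicities alternating between $s$ and $s+1$ (where $r=2s+1$), which keeps every multiplicity at least $2$ so that Lemma~\ref{lemma:all_multiedges} applies verbatim; the positive-rank divisor is $s(s+1)$ chips on each of two adjacent vertices and slides around by firing in blocks of $s$ and then $s+1$ moves.

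The substantive divergence is the simple case for $r\ge 6$. Your generalized antiprism gives the $\mfgon$ lower bound cleanly, but the obstacle you flag---producing and verifying a concentrated positive-rank divisor whose degree is bounded in $r$ alone---is real, and your plan does not indicate how to overcome it. Already for $r=4$ the paper verifies the antiprism divisor only by direct computation; for a proof valid for all $r$ you would need a uniform construction, and the thickening diagonals make the outdegree profile of a short block genuinely intricate. The paper sidesteps this entirely with a different family: take $N$ copies of $K_{r-3}$ arranged in a cycle, joining each vertex to two vertices in each neighbouring copy (so the graph is $K_{r-3}\,\square\,C_N$ plus one extra diagonal matching per interface). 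Here the positive-rank divisor is transparent---place $4$ chips on every vertex of one $K_{r-3}$, degree $4(r-3)$, and fire whole copies to slide it around the cycle---while the $\mfgon$ bound comes from Pigeonhole (some copy carries at most $r-5$ chips) together with Lemma~\ref{lemma:complete_graph_burning}, which burns that copy outright and then propagates since every vertex in an adjacent copy has two burning neighbours. Swapping in this construction removes your main obstacle.
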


\begin{proof}
The non-simple graph is easier, so we construct that one first.  We illustrate our constructed graphs for $r=4$ and $r=5$ in Figure \ref{figure:regular_multigraphs}.  If $r$ is even, construct a multigraph $G$ on $r+1$ vertices with the cycle $C_{r+1}$ as the underlying simple graph, where each adjacent pair of vertices is connected by $r/2$ edges.  Then for any $v\in V(G)$, the divisor $r(v)$ has nonnegative rank:  it is equivalent to $\frac{r}{2}(v')+\frac{r}{2}(v'')$ for any pair of vertices $v',v''$ equidistant from $v$ on the cycle.  Thus $\gon(G)\leq r<r+1=|V(G)|$.  Since there are $r/2\geq 2$ edges between each pair of neighboring vertices, we may apply Lemma \ref{lemma:all_multiedges} to conclude that $G$ has gonality strictly smaller than multiplicity-free gonality.

If $r$ is odd, write $r=2s+1$.  Construct $G$ on $2s(s+1)+2$ vertices with the cycle $C_{2s(s+1)+2}$ as the underlying graph, where the number of edges between two vertices alternates between $s$ and $s+1$ as we move around the cycle.  Choose two vertices $v_1$ and $v_2$ connected by $s$ edges, and consider the divisor $D=s(s+1)(v_1)+s(s+1)(v_2)$.  By chip-firing the set $\{v_1,v_2\}$ a total of $s$ times, we have that $D$ is equivalent to $s(s+1)(w_1)+s(s+1)(w_2)$, where $w_i$ is the  neighbor of $v_i$ not in $\{v_1,v_2\}$.  Then, by chip-firing the set $\{v_1,v_2,w_1,w_2\}$ a total of $s+1$ times, we have that $D$ is equivalent to $s(s+1)(u_1)+s(s+1)(u_2)$, where $u_i$ is the neighbor of $w_i$ not in $\{v_1,v_2\}$.  Continuing in this way, we may move our chips around the whole graph, so $r(D)>0$.  This means that $\gon(G)\leq \deg(D)=2s(s+1)<2s(s+1)+2=|V(G)|$.   Since \(r\geq 5\), there are at least $s\geq 2$ edges between each pair of neighboring vertices, we may apply Lemma \ref{lemma:all_multiedges} to conclude that $G$ has \(\gon(G)<\mfgon(G)\).

\begin{figure}[hbt]
   		 \centering
\includegraphics[scale=0.7]{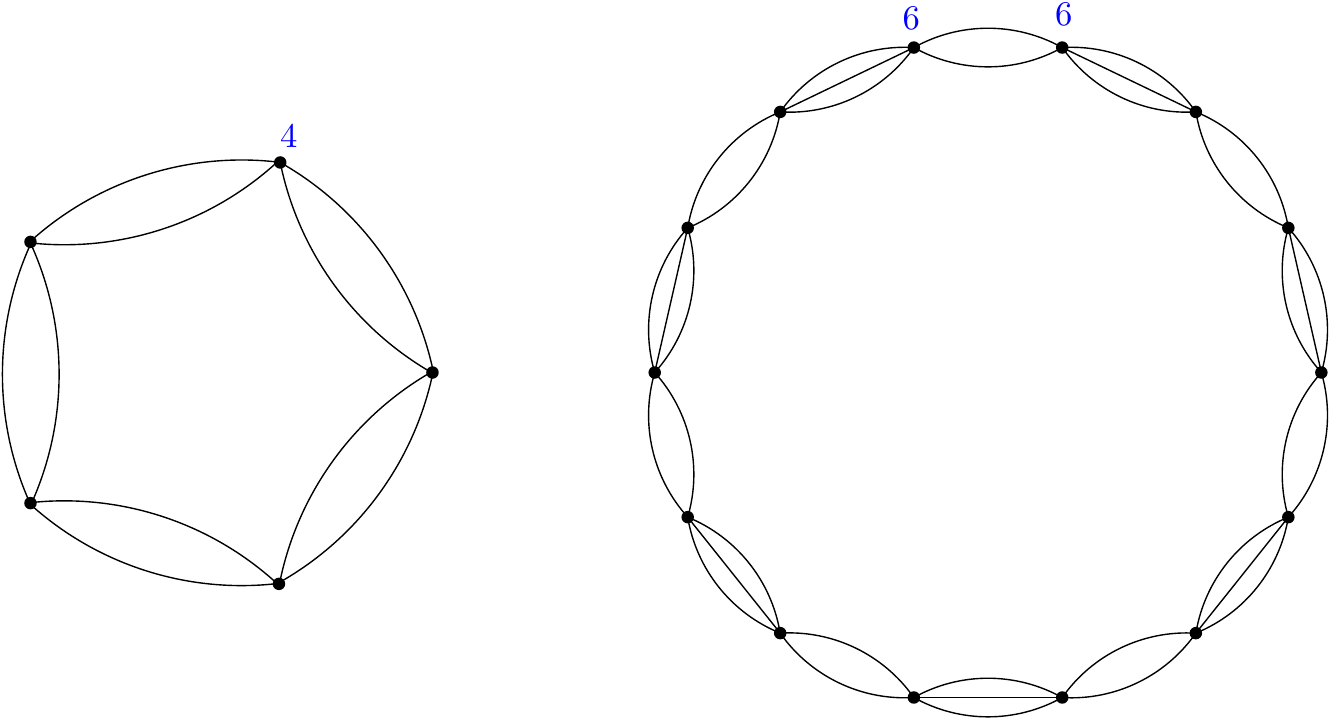}
	\caption{The $4$-regular and $5$-regular non-simple graphs constructed in the proof, along with positive rank divisors}
	\label{figure:regular_multigraphs}
\end{figure}

We now construct a simple graph with our desired properties. The graph we will end up constructing for $r=7$ appears in Figure \ref{figure:7-regular}; there are $9$ copies of $K_4$, connected as pictured.

\begin{figure}[hbt]
   		 \centering
\includegraphics[scale=0.7]{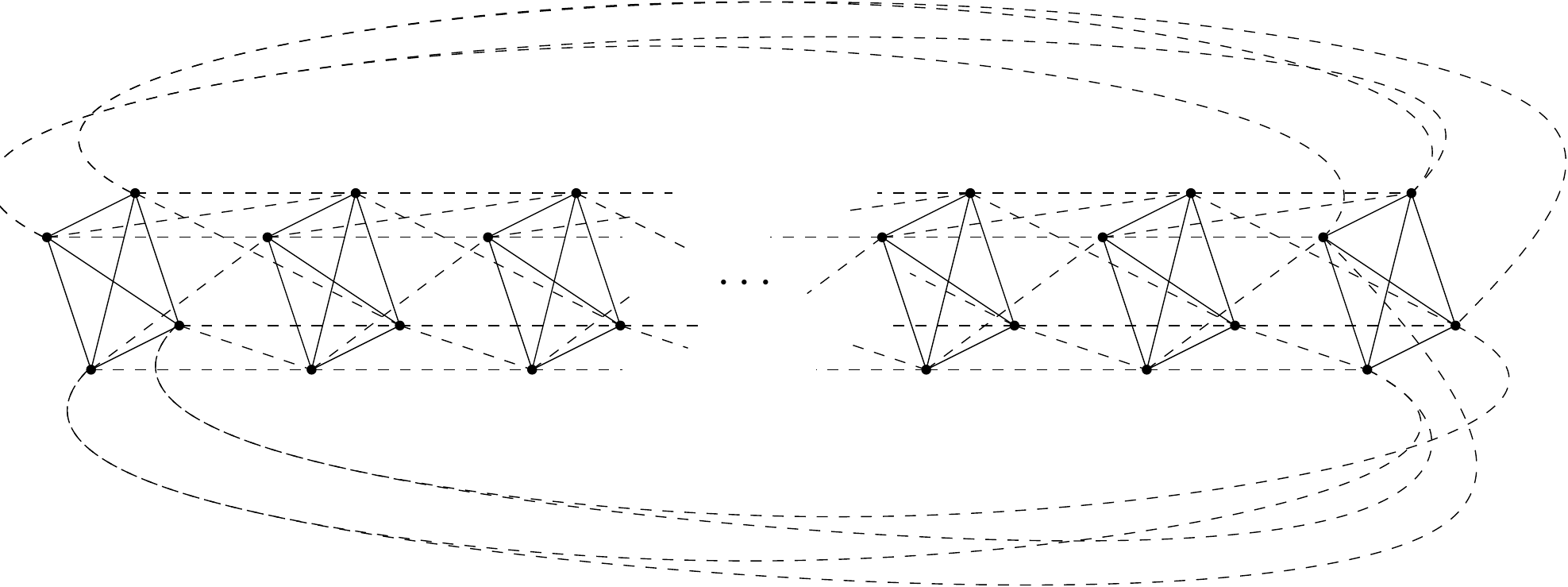}
	\caption{A simple $7$-regular graph with $\gon(G) < \mfgon(G)$; there are $9$ copies of $K_4$}
	\label{figure:7-regular}
\end{figure}

If $r=4$ or $r=5$, we can use one of the graphs from Example \ref{example:antiprism}.  Now let $r\geq 6$.  
Choose $N$ such that $\frac{4r-3}{r-4}<N$, and construct $N$ copies of $K_{r-3}$, referring to them as $K_{r-3}^{(1)}$,$\cdots$,$K_{r-3}^{(N)}$.  Label the vertices of $K_{r-3}^{(i)}$ as $v_1^{(i)},\cdots,v_{r-3}^{(i)}$.  Let $\overline{n}$ denote $n\mod N$. For pairs $i_1$ and $i_2$ with $i_2=\overline{i_1+1}$, connect $v_j^{(i_1)}$ to both $v_j^{(i_2)}$ and $v_{\overline{j+1}}^{(i_2)}$ for all $j$. (Without the $v_j^{(i_1)}v_{\overline{j+1}}^{(i_2)}$ edges, this graph would simply be the Cartesian product $K_{r-3}\boxempty C_N$.)

Note that every vertex $v$ in $G$ has valence $(r-4)+4=r$, where $r-4$ edges come from the $K_{r-3}$ containing $v$ and the other $4$ edges connect $v$ to other copies of $K_{r-3}$.  Also note that $\gon(G)\leq 4(r-3)$:  placing $4$ chips on every vertex of one copy of $K_{r-3}$ yields a positive rank divisor, as we may spread these chips around the graph by iteratively firing copies of $K_{r-3}$.  Suppose $G$ has  a multiplicity-free divisor $D$ achieving gonality.  Then $\deg(D)\leq 4(r-3)$.  Since we chose $N$ such that $\frac{4r-3}{r-4}<N$, we have $\frac{4r-3}{N}<r-4$, so $\left\lfloor{\frac{4r-3}{N}}\right\rfloor\leq r-5$, and thus by the Pigeonhole Principle at least one copy of $K_{r-3}$ has at most $r-5$ chips on it.  Choose a vertex $q$ of this $K_{r-3}$ with no chips on it, and run Dhar's burning algorithm on $D-(q)$.  By Lemma \ref{lemma:complete_graph_burning}, the copy of  $K_{r-3}$ containing $q$ burns.  Then, the two neighboring copies of $K_{r-3}$ burn as well:  each of their vertices has two incident burning edges, but at most one chip.  Their neighboring $K_{r-3}$'s burn as well, until the whole graph burns, contradicting $r(D)>0$.  Thus $G$ is an $r$-regular simple graph with gonality strictly smaller than multiplicity-free gonality.

\end{proof}

We pose the following as an open question.

\begin{question}
Does there exist a \(3\)-regular graph with \(\gon(G)<\mfgon(G)\)?
\end{question}

If $G$ is $3$-regular, then every divisor $D$ on $G$ with positive (indeed, nonnegative) rank with degree at most \(g-1\) is equivalent to a divisor $D'$ that places at most $2$ chips on each vertex; this is a consequence of work in \cite{spencer}, stated explicitly in the discussion following \cite[Lemma 16]{cartesian_products}.  We might hope that by performing chip-firing moves, we could transform such a divisor into a multiplicity-free one.  The next example shows that this strategy will not work for all divisors on $3$-regular graphs.

\begin{example} 

\begin{figure}[hbt]
   		 \centering
\includegraphics[scale=0.8]{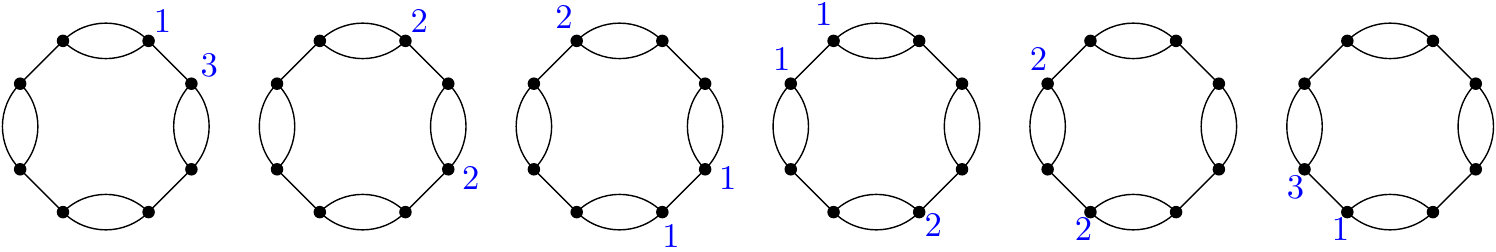}
	\caption{The effective representatives of a divisor class of positive rank, with no multiplicity-free representatives}
	\label{figure:trivalent_example}
\end{figure}

Consider the ``loop of loops'' $L_5$ of genus $5$.  This $3$-regular graph is pictured in Figure \ref{figure:trivalent_example}, with six equivalent divisors; in fact, pictured are all effective representatives of an equivalence class of divisors.  Letting $D$ refer to any one of these divisors, we have that $r(D)>0$, and $\deg(D)=4$.  It turns out that $L_5$ has gonality $4$, as can be shown with an exhaustive Dhar's burning algorithm argument.  Thus, even though $D$ achieves gonality, it is not equivalent to any multiplicity-free divisor.

\begin{figure}[hbt]
   		 \centering
\includegraphics[scale=1]{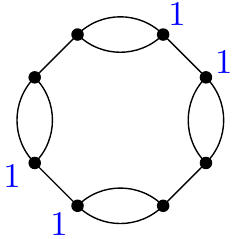}
	\caption{A multiplicity-free divisor on $L_5$ achieving gonality}
	\label{figure:trivalent_property_m}
\end{figure}

However, $L_5$ does have \(\gon(L_5)=\mfgon(L_5\)):  a multiplicity-free divisor achieving gonality is pictured in Figure \ref{figure:trivalent_property_m}.  Nonetheless, this example illustrates that if we wish to prove that any $3$-regular graph has \(\gon(G)=\mfgon(G)\), it will not work to start with an arbitrary divisor achieving gonality and then perform chip-firing moves until it is multiplicity-free.

\end{example}

\bibliographystyle{alpha}
\bibliography{bibliography}

\appendix

\section{Lemmas for the wheel graph}\label{section:wheel_lemmas}

\begin{proposition}\label{rootfloorproof}  For any positive integer \(n\), we have 
$\lceil \sqrt{n}\rceil - 1 + \left\lceil \frac{n}{\lceil \sqrt{n}\rceil} \right\rceil = \lfloor \sqrt{n}\rfloor - 1 + \left\lceil \frac{n}{\lfloor \sqrt{n}\rfloor} \right\rceil$.
\end{proposition}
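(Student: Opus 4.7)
The plan is to reduce to the case where $n$ is not a perfect square, and then do a direct two-case calculation after substituting $n = k^2 + r$.

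First, if $n$ is a perfect square then $\lceil\sqrt n\rceil = \lfloor\sqrt n\rfloor$ and the identity is immediate. So assume henceforth that $n$ is not a perfect square, set $k = \lfloor\sqrt n\rfloor$, so that $\lceil\sqrt n\rceil = k+1$ and $k^2 < n < (k+1)^2$. Writing $n = k^2 + r$, we have $1 \leq r \leq 2k$. The identity to prove then simplifies (after cancelling and rearranging) to
\[
\left\lceil \frac{n}{k}\right\rceil - \left\lceil \frac{n}{k+1}\right\rceil = 1.
\]

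Next I would compute each ceiling separately. Observe that $\frac{n}{k} = k + \frac{r}{k}$ with $\frac{r}{k} \in (0,2]$, so $\lceil n/k\rceil = k + \lceil r/k\rceil$. Using the identity $k^2 = (k+1)(k-1) + 1$, we also get $\frac{n}{k+1} = (k-1) + \frac{r+1}{k+1}$, and since $\frac{r+1}{k+1} \leq \frac{2k+1}{k+1} < 2$, we have $\lceil n/(k+1)\rceil = (k-1) + \lceil (r+1)/(k+1)\rceil$.

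Now I split into two cases based on $r$. If $1 \leq r \leq k$, then $\lceil r/k\rceil = 1$ and $r+1 \leq k+1$ so $\lceil (r+1)/(k+1)\rceil = 1$, giving $\lceil n/k\rceil = k+1$ and $\lceil n/(k+1)\rceil = k$, whose difference is $1$. If instead $k+1 \leq r \leq 2k$, then $\lceil r/k\rceil = 2$ and $k+2 \leq r+1 \leq 2k+1$ gives $\lceil (r+1)/(k+1)\rceil = 2$, so $\lceil n/k\rceil = k+2$ and $\lceil n/(k+1)\rceil = k+1$, whose difference is again $1$. This completes the proof.

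There is no real obstacle here — the argument is a routine two-case computation once we rewrite $n$ in base $k$ (resp.\ $k+1$). The only subtlety is remembering to check that $\frac{r+1}{k+1}$ is strictly less than $2$ so that its ceiling is at most $2$; the inequality $\frac{2k+1}{k+1} = 2 - \frac{1}{k+1} < 2$ takes care of this.
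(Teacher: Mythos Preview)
Your proof is correct and is essentially the same as the paper's: both dispose of the perfect-square case, reduce to showing $\lceil n/k\rceil - \lceil n/(k+1)\rceil = 1$ where $k=\lfloor\sqrt n\rfloor$, and split into the two cases $n\le k^2+k$ versus $n>k^2+k$ (which is exactly your $r\le k$ versus $r\ge k+1$). The only cosmetic difference is your use of the substitution $n=k^2+r$ to organize the arithmetic.
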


\begin{proof}
The claim is immediately true for square numbers $n$, since everything being rounded is already an integer. Thus, assume $n$ is not a square number. Let $s = \lfloor \sqrt{n} \rfloor$. It suffices to show $\lceil \frac{n}{s}\rceil = \lceil \frac{n}{s+1}\rceil + 1$. Note that $s^2 <n$ and since $n$ is an integer, $(s+1)^2 -1 \geq n \Rightarrow s^2 +2s \geq n$.

Suppose $n \leq s^2 + s$. Then it follows that 

\begin{align*}
s < \frac{n}{s} \leq s+1 \\ & \Rightarrow \left \lceil \frac{n}{s}\right \rceil = s+1 \\ s-1 < \frac{n}{s+1} \leq s \\ & \Rightarrow \left \lceil \frac{n}{s+1}\right \rceil = s \\
\end{align*}
Thus, the claim is true.
\\\\Suppose $n > s^2 + s$. Then it follows that 

\begin{align*}
s+1 < \frac{n}{s} \leq s+2 \\ & \Rightarrow \left \lceil \frac{n}{s}\right \rceil = s+2 \\ s < \frac{n}{s+1} < s+1 \\ & \Rightarrow \left \lceil \frac{n}{s+1}\right \rceil = s+1 \\
\end{align*}
And the claim is also true.
\end{proof}

\begin{proposition}\label{prop:wheel_equality}  Let \(n\geq 3\). We have
$\lfloor \sqrt{n}\rfloor - 1 + \lceil \frac{n}{\lfloor \sqrt{n}\rfloor} \rceil = \lceil \frac{n}{2} \rceil+1 $ if and only if \(n\leq 8\).
\end{proposition}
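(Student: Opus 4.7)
The plan is to reduce the proposition to a finite case check by means of a simple growth-rate comparison. Let $s = \lfloor\sqrt{n}\rfloor$ and write $n = s^2 + r$ with $0 \leq r \leq 2s$. First I would compute the left-hand side in each of three subcases: when $r = 0$ we have $\lceil n/s\rceil = s$, so LHS $= 2s - 1$; when $1 \leq r \leq s$ we have $\lceil n/s\rceil = s + 1$, so LHS $= 2s$; and when $s + 1 \leq r \leq 2s$ we have $\lceil n/s\rceil = s + 2$, so LHS $= 2s + 1$. In every case LHS $\leq 2s + 1 \leq 2\sqrt{n} + 1$.

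The right-hand side satisfies $\lceil n/2\rceil + 1 \geq n/2 + 1$. Consequently, if the two sides are equal then $n/2 + 1 \leq 2\sqrt{n} + 1$, which rearranges to $n \leq 16$. This bound reduces the proposition to a finite verification over the range $3 \leq n \leq 16$. For the forward implication, I would tabulate both sides at each $n \in \{3,4,5,6,7,8\}$ (with $s = 1$ when $n = 3$ and $s = 2$ otherwise) and observe that they agree. For the reverse implication, I would carry out the same tabulation for $9 \leq n \leq 16$ and check that equality fails in each instance, so that no $n \geq 9$ witnesses the proposition.

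The main obstacle is the bookkeeping of the finite check rather than any deep idea: because the right-hand side depends on the parity of $n$ through $\lceil n/2\rceil$, each of the three subranges of $r$ must be split further by the parity of $s^2 + r$. Extra care is also needed near the boundaries $n = s^2$ and $n = (s+1)^2 - 1$, where the value of $s$ itself jumps and both $\lceil n/s\rceil$ and the resulting LHS shift. Once these parity- and boundary-cases are handled cleanly, the tabulation is short and the proposition follows.
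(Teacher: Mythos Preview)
Your proposal is correct and follows essentially the same strategy as the paper: bound the left-hand side by something on the order of $2\sqrt{n}$, observe that the right-hand side grows like $n/2$, deduce a finite bound on $n$, and then tabulate. The only cosmetic difference is that you obtain the bound $\text{LHS}\leq 2\sqrt{n}+1$ via an exact three-case computation of $\lceil n/s\rceil$, whereas the paper uses the cruder inequality chain $\text{LHS}<\sqrt{n}+n/\lfloor\sqrt{n}\rfloor\leq 2n/\lfloor\sqrt{n}\rfloor$ and then notes that this is at most $n/2$ once $n\geq 16$; both routes land on checking $n\leq 16$ (the paper gets $n\leq 15$) by hand. Your final paragraph overstates the difficulty of the finite check---there is no need to split by parity or track the subranges of $r$, since a direct tabulation of fourteen values suffices---but this does not affect correctness.
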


\begin{proof}
We have
\begin{align*}
    \floor{\sqrt{n}}-1+\left\lceil{\frac{n}{\floor{\sqrt{n}}}}\right\rceil < & \sqrt{n}+\frac{n}{\floor{\sqrt{n}}}
    \\=& n\cdot \left(\frac{1}{\sqrt{n}}+\frac{1}{\floor{\sqrt{n}}}\right)
    \\\leq &n\cdot \frac{2}{\lfloor \sqrt n\rfloor}. 
\end{align*}
Note that for \(n\geq 16\), we have \(\frac{2}{\lfloor \sqrt n\rfloor}\leq \frac{2}{\lfloor \sqrt 16\rfloor}=\frac{1}{2}\), so we have
\[ \floor{\sqrt{n}}-1+\left\lceil{\frac{n}{\floor{\sqrt{n}}}}\right\rceil< \frac{n}{2}<\left\lceil\frac{n}{2}\right\rceil+1.\]
Thus it suffices to compare the two formulas for \(3\leq n\leq 15\). By direct computation we find equality precisely when \(3\leq n\leq 8\).
\end{proof}

\end{document}